\numberwithin{equation}{section}
\numberwithin{figure}{section}
\theoremstyle{plain}
\newtheorem{thm}{\protect\theoremname}
  \theoremstyle{remark}
  \newtheorem{rem}[thm]{\protect\remarkname}
  \theoremstyle{definition}
  \newtheorem{defn}[thm]{\protect\definitionname}
  \theoremstyle{plain}
  \newtheorem{prop}[thm]{\protect\propositionname}
  \theoremstyle{plain}
  \newtheorem{cor}[thm]{\protect\corollaryname}
  \theoremstyle{plain}
  \newtheorem{lem}[thm]{\protect\lemmaname}
  \theoremstyle{plain}
  \newtheorem{conjecture}[thm]{\protect\conjecturename}
  \providecommand{\conjecturename}{Conjecture}
  \providecommand{\corollaryname}{Corollary}
  \providecommand{\definitionname}{Definition}
  \providecommand{\lemmaname}{Lemma}
  \providecommand{\propositionname}{Proposition}
  \providecommand{\remarkname}{Remark}
\providecommand{\theoremname}{Theorem}
\begin{document}
\address[Minoru Hirose]{Faculty of mathematics, Kyushu University,
744, Motooka, Nishi-ku, Fukuoka, 819-0395, Japan}
\email{m-hirose@math.kyushu-u.ac.jp}
\subjclass[2010]{11M32}
\keywords{multiple zeta values, symmetric multiple zeta values, double shuffle relations, iterated integrals}

\title{Double shuffle relations for refined symmetric multiple zeta values}

\author{Minoru Hirose}
\begin{abstract}
Symmetric multiple zeta values (SMZVs) are elements in the ring of
all multiple zeta values modulo the ideal generated by $\zeta(2)$
introduced by Kaneko-Zagier as counterparts of finite multiple zeta
values. It is known that symmetric multiple zeta values satisfy double
shuffle relations and duality relations. In this paper, we construct
certain lifts of SMZVs which live in the ring generated by all multiple
zeta values and $2\pi i$ as certain iterated integrals on $\mathbb{P}^{1}\setminus\{0,1,\infty\}$
along a certain closed path. We call this lifted values as refined
symmetric multiple zeta values (RSMZVs). We show double shuffle relations
and duality relations for RSMZVs. These relations are refinements
of the double shuffle relations and the duality relations of SMZVs.
Furthermore we compare RSMZVs to other variants of lifts of SMZVs.
Especially, we prove that RSMZVs coincide with Bachmann-Takeyama-Tasaka's
$\xi$-values.
\end{abstract}

\date{\today}

\maketitle

\section{Introduction}

For an index $\Bbbk=(k_{1},\dots,k_{d})$, a multiple zeta value $\zeta(\Bbbk)$
is a real number defined by
\[
\zeta(\Bbbk)=\sum_{0<m_{1}<\cdots<m_{d}}m_{1}^{-k_{1}}\cdots m_{d}^{-k_{d}}
\]
where $k_{1},\dots,k_{d-1}\in\mathbb{Z}_{\geq1}$ and $k_{d}\in\mathbb{Z}_{\geq2}$.
Let $\mathcal{Z}$ be a $\mathbb{Q}$-subalgebra of $\mathbb{R}$
generated by $1$ and all multiple zeta values. A symmetric multiple
zeta value (SMZV) is an element of $\mathcal{Z}/\pi^{2}\mathcal{Z}$
defined by
\[
\zeta^{S}(k_{1},\dots,k_{d}):=\left(\sum_{i=0}^{d}(-1)^{k_{i+1}+\cdots+k_{d}}\zeta_{\shuffle}(k_{1},\dots,k_{i})\zeta_{\shuffle}(k_{d},\dots,k_{i+1})\mod\pi^{2}\right)
\]
where $\zeta_{\shuffle}(\Bbbk)$ is the shuffle regularized multiple
zeta value. SMZVs are introduced by Kaneko and Zagier as counterparts
of finite multiple zeta values \cite{KanekoFMZV}\cite{KZ}. In this
paper, we define \emph{refined symmetric multiple zeta values} (RSMZV)
$\zeta^{RS}(\Bbbk)\in\mathcal{Z}[2\pi i]=\mathcal{Z}\oplus2\pi i\mathcal{Z}$
by considering iterated integrals along the non-trivial simple path
from $0$ to $0$ on $\mathbb{P}^{1}(\mathbb{C})\setminus\{0,1,\infty\}$
(see Figure \ref{fig:path_beta}), and show the following properties:
\begin{itemize}
\item $\zeta^{RS}(\Bbbk)$ is a lift of $\zeta^{S}(\Bbbk)$ i.e., $\rho(\zeta^{RS}(\Bbbk))=\zeta^{S}(\Bbbk)$
where $\rho:\mathcal{Z}[2\pi i]\xrightarrow{{\rm Re}}\mathcal{Z}\to\mathcal{Z}/\pi^{2}\mathcal{Z}$
(Corollary~\ref{cor:RSMZV_is_lift_of_SMZV}).
\item $\zeta^{RS}$ satisfies double shuffle relations, duality relations,
and reversal formula (Theorems~\ref{thm:double_shuffle} and~ \ref{thm:duality_and_reversal}).
\item $\zeta^{RS}$ coincides with Bachmann-Takeyama-Tasaka's $\xi$-value
in \cite[Section 2.3.3]{BTT2017} (see Remark~\ref{rem:BTT}).\end{itemize}
\begin{rem}
The definition of $\zeta^{RS}(\Bbbk)$ is essentially equivalent to
the case $N=1$ and $\lambda=0$ of \cite[Definition 4.25]{Jaro2}.
Jarossay already showed the lifting property $\rho(\zeta^{RS}(\Bbbk))=\zeta^{S}(\Bbbk)$
(Corollary~\ref{cor:RSMZV_is_lift_of_SMZV}) and the first formula
of Theorems~\ref{thm:double_shuffle} (see the first sentence of
page 33 of \cite{Jaro2}). He also discussed the double shuffle relations
of $\zeta_{\shuffle}^{S}$ and $\zeta_{*}^{S}$ without modulo $\zeta(2)$
(see \cite[Section 4.6.1]{Jaro2}.
\end{rem}
The contents of this paper are as follows. In Section \ref{sec:iterated_integral},
we give a definition of RSMZVs and show some basic facts. In Section
\ref{sec:relations}, we formulate the double shuffle relations, and
prove the duality relation and the reversal formula. In Section \ref{sec:Other_variants},
we consider other variants of SMZVs and compare them to RSMZVs, and
give a proof of the double shuffle relations. In Section \ref{sec:Complementary-results},
we present some complementary results.

\section{\label{sec:iterated_integral}Iterated integral expression of refined
symmetric multiple zeta values}

\subsection{Iterated integral symbols}

Let us introduce some notions concerning (regularized) iterated integrals.
Our basic references are \cite{gilmultiple} and \cite[Section 2]{Gon}.
We define a \emph{tangential base point} $v_{p}$ as a pair of a point
$p\in\mathbb{C}$ and a nonzero tangential vector $v\in T_{p}\mathbb{C}=\mathbb{C}$.
We define a \emph{path} from $v_{p}$ to $w_{q}$ on a subset $M\subset\mathbb{C}$
as a continuous piecewise smooth map $\gamma:[0,1]\to\mathbb{C}$
such that $\gamma(0)=p$, $\gamma'(0)=v$, $\gamma(1)=q$, $\gamma'(1)=-w$
and $\gamma(t)\in M$ for all $0<t<1$. We denote by $\pi_{1}(M,v_{p},w_{q})$
the set of homotopy classes of paths from $v_{p}$ to $w_{q}$ on
$M$. For tangential base points $x,y,z$ and a subset $M\subset\mathbb{C}$,
the composition map
\[
\pi_{1}(M,x,y)\times\pi_{1}(M,y,z)\to\pi_{1}(M,x,z)\ \ ;\ \ (\gamma_{1},\gamma_{2})\mapsto\gamma_{1}\gamma_{2}
\]
and the inverse map
\[
\pi_{1}(M,x,y)\to\pi_{1}(M,y,x)\ \ ;\ \ \gamma\mapsto\gamma^{-1}
\]
are naturally defined.
\begin{defn}
Fix complex numbers $a_{1},\dots,a_{n}\in\mathbb{C}$ and tangential
base points $x,y$. For $\Gamma\in\pi_{1}(\mathbb{C}\setminus\{a_{1},\dots,a_{n}\},x,y)$,
we define $I_{\Gamma}(x;a_{1},\dots,a_{n};y)\in\mathbb{C}$ as follows.
For a representative $\gamma$ of $\Gamma$, define a function $F_{\gamma}:(0,\frac{1}{2})\to\mathbb{C}$
by
\[
F_{\gamma}(t):=\int_{t<t_{1}<\cdots<t_{n}<1-t}\prod_{j=1}^{n}\frac{d\gamma(t_{j})}{\gamma(t_{j})-a_{j}}.
\]
Then there exists complex numbers $c_{0},c_{1},\dots,c_{n}\in\mathbb{C}$
such that
\[
F_{\gamma}(t)=\sum_{k=0}^{n}c_{k}(\log t)^{k}+O(t\log^{n+1}t).
\]
for $t\to0$. It is known that $c_{0},\dots,c_{n}$ do not depend
on the choice of $\gamma$. We define $I_{\Gamma}(x;a_{1},\dots,a_{n};y):=c_{0}$.
(In this notation, the information of tangential base points $x,y$
is redundant, however, we do not omit them to conform to the standard
notations).
\end{defn}
Note that if the index is admissible then this is just a usual iterated
integral i.e., if $p\neq a_{1}$ and $q\neq a_{n}$ then
\[
I_{[\gamma]}(v_{p};a_{1},\dots,a_{n};w_{q})=\int_{0<t_{1}<\cdots<t_{n}<1}\prod_{j=1}^{n}\frac{d\gamma(t_{j})}{\gamma(t_{j})-a_{j}}.
\]

\subsection{Definition and explicit expression of refined symmetric multiple
zeta values}

We define two tangential basepoints $0'$ and $1'$ by
\[
0'=1_{0},\ 1'=(-1)_{1}.
\]
Put $M=\mathbb{C}\setminus\{0,1\}$. Let ${\rm dch}\in\pi_{1}(M,0',1')$
be (the homotopy class represented by) the straight line from $0'$
to $1'$, $\alpha\in\pi_{1}(M,1',1')$ the path from $1'$ to $1'$
which circles $1$ one times counterclockwise, and $\beta={\rm dch}\cdot\alpha\cdot{\rm dch}^{-1}$
the path from $0'$ to $0'$ which circles $1$ one times counterclockwise
(see Figure \ref{fig:path_alpha}, \ref{fig:path_dch} and \ref{fig:path_beta}).

\begin{figure}
\begin{minipage}[t]{0.33\columnwidth}%
\begin{center}
\includegraphics[scale=0.2]{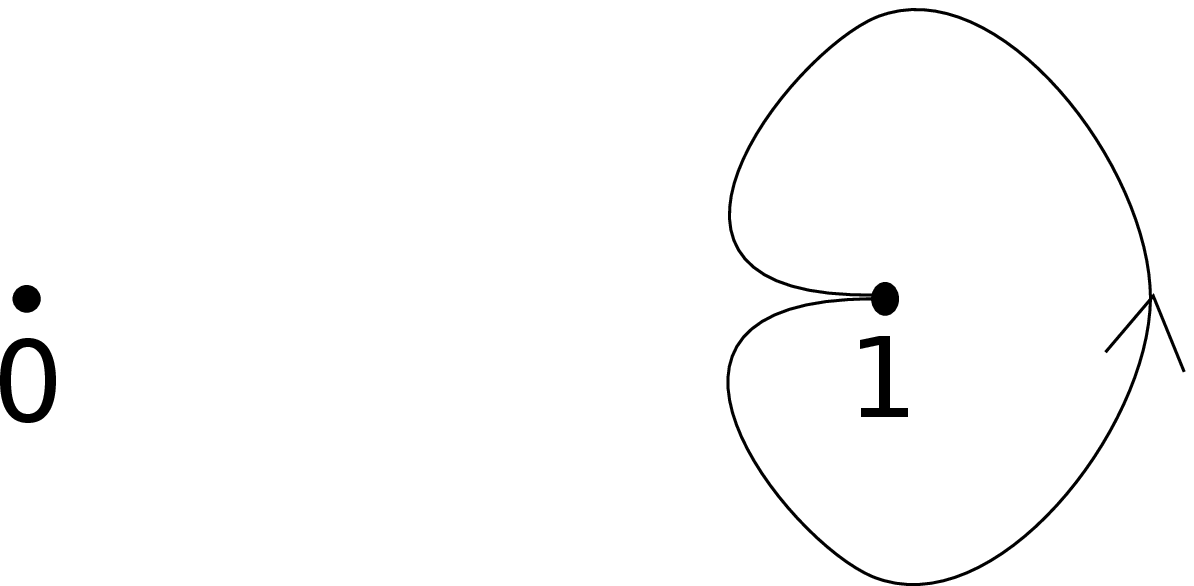} \caption{\label{fig:path_alpha}a path $\alpha$}

\par\end{center}%
\end{minipage}%
\begin{minipage}[t]{0.33\columnwidth}%
\begin{center}
\includegraphics[scale=0.2]{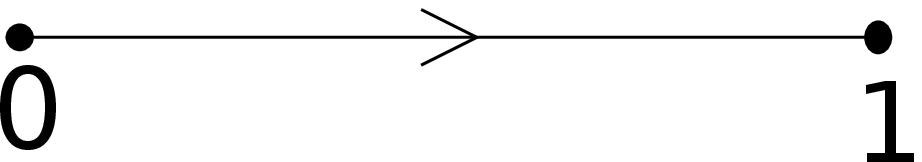}\caption{\label{fig:path_dch}a path ${\rm dch}$}

\par\end{center}%
\end{minipage}%
\begin{minipage}[t]{0.33\columnwidth}%
\begin{center}
\includegraphics[scale=0.2]{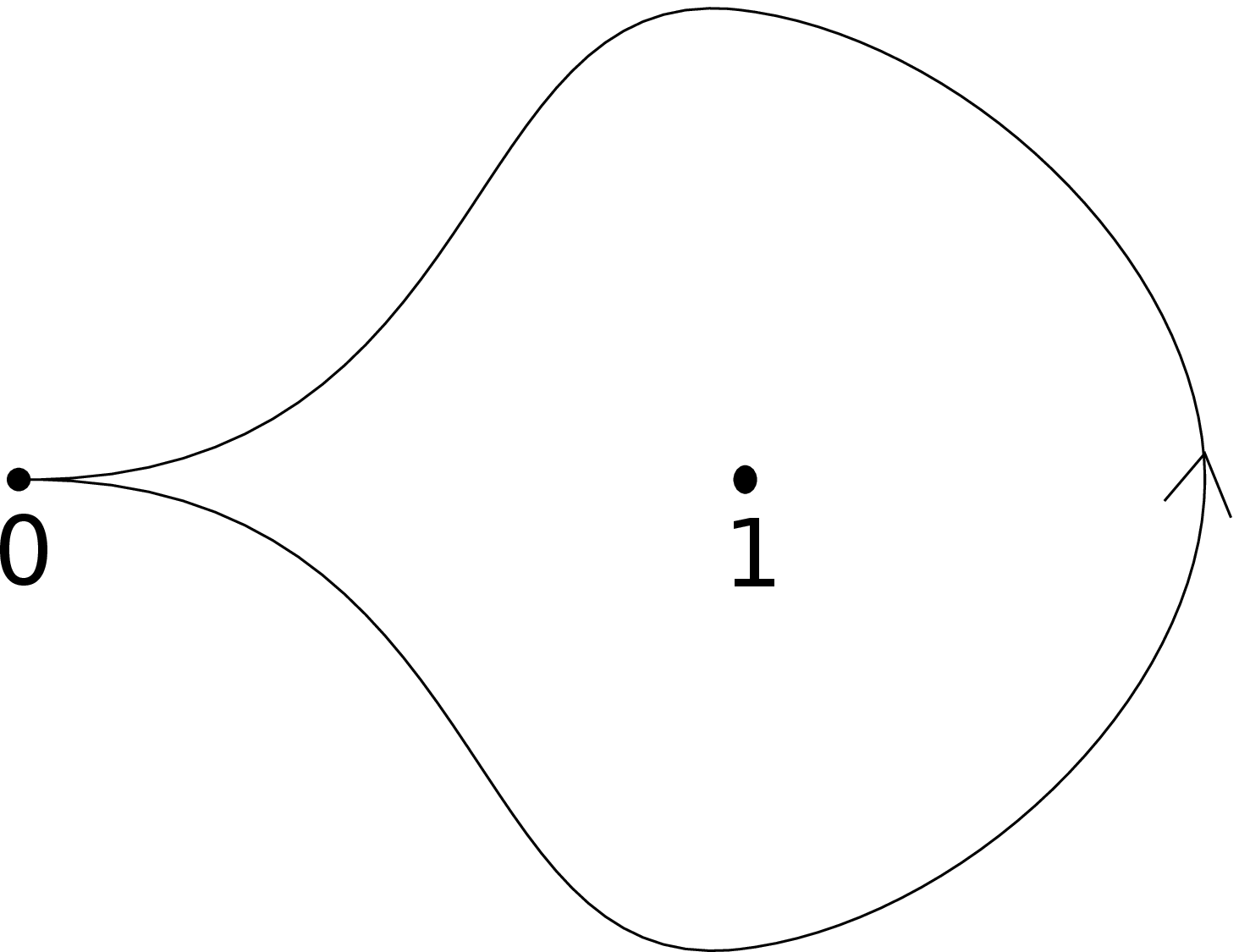}\caption{\label{fig:path_beta}a path $\beta$}

\par\end{center}%
\end{minipage}
\end{figure}
Note that
\begin{align*}
(-1)^{d}\zeta_{\shuffle}(k_{1},\dots,k_{d}) & =I_{{\rm dch}}(0';1,\overbrace{0,\dots,0}^{k_{1}-1},\dots,1,\overbrace{0,\dots,0}^{k_{d}-1};1')\\
 & =(-1)^{k_{1}+\cdots+k_{d}}I_{{\rm dch}^{-1}}(1';\overbrace{0,\dots,0}^{k_{d}-1},1,\dots,\overbrace{0,\dots,0}^{k_{1}-1},1;0')
\end{align*}
and
\[
I_{\alpha}(1';a_{1},\dots,a_{n};1')=\begin{cases}
\frac{(2\pi i)^{n}}{n!} & a_{1}=\cdots=a_{n}=1\\
0 & {\rm otherwise}
\end{cases}
\]
for $a_{1},\dots,a_{n}\in\{0,1\}$ (see \cite[Theorem 3.190 and Examples 3.198]{gilmultiple}).
\begin{defn}
For $d\geq0$ and $k_{1},\dots,k_{d}\in\mathbb{Z}_{\geq1}$, we define
a refined symmetric multiple zeta value $\zeta^{RS}(k_{1},\dots,k_{d})\in\mathbb{C}$
by
\[
\zeta^{RS}(k_{1},\dots,k_{d}):=\frac{(-1)^{d}}{2\pi i}I_{\beta}(0';1,\overbrace{0,\dots,0}^{k_{1}-1},\dots,1,\overbrace{0,\dots,0}^{k_{d}-1},1;0').
\]
For example, $\zeta^{RS}(3,2)=\frac{1}{2\pi i}I(0';1,0,0,1,0,1;0')$
and $\zeta^{RS}(\emptyset)=\frac{1}{2\pi i}I_{\beta}(0';1;0')=1$.
\end{defn}
There are several ways to express RSMZVs by multiple zeta values.
We give one of such expressions obtained by a most naive way here
(see Corollary \ref{cor:RS_and_other_variants} for other expressions). 
\begin{prop}
\label{prop:RS_explicit}We have 
\[
\zeta^{RS}(k_{1},\dots,k_{d})=\sum_{\substack{0\leq a\leq b\leq d\\
k_{j}=1\ \text{for all }a<j\leq b
}
}\frac{(-2\pi i)^{b-a}}{(b-a+1)!}(-1)^{k_{b+1}+\cdots+k_{d}}\zeta_{\shuffle}(k_{1},\dots,k_{a})\zeta_{\shuffle}(k_{d},\dots,k_{b+1}).
\]
\end{prop}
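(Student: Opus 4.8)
The plan is to expand the defining integral $I_{\beta}(0';w;0')$, where $w=(1,0^{k_{1}-1},\dots,1,0^{k_{d}-1},1)$, by using multiplicativity of iterated integrals under concatenation of paths. Applying the composition formula (see \cite{gilmultiple}) to $\beta={\rm dch}\cdot\alpha\cdot{\rm dch}^{-1}$ twice, I would write
\[
I_{\beta}(0';w;0')=\sum_{0\le r\le s\le n} I_{{\rm dch}}(0';w_{[1,r]};1')\,I_{\alpha}(1';w_{[r+1,s]};1')\,I_{{\rm dch}^{-1}}(1';w_{[s+1,n]};0'),
\]
where $n=k_{1}+\dots+k_{d}+1$ is the length of $w$ and $w_{[p,q]}=(w_{p},\dots,w_{q})$ denotes a contiguous subword. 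Everything then reduces to identifying which pairs $(r,s)$ contribute and to evaluating the three factors.

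By the explicit value of $I_{\alpha}$ recalled before the definition, the middle factor vanishes unless $w_{[r+1,s]}$ is a string of $1$'s, in which case it equals $(2\pi i)^{s-r}/(s-r)!$. The ones of $w$ occur exactly at the positions $P_{j}:=1+k_{1}+\dots+k_{j}$ for $0\le j\le d$, and two consecutive ones are adjacent precisely when the $k$ between them equals $1$. Hence a nonempty admissible middle block is exactly a run $(1^{(a)},\dots,1^{(b)})$ with $0\le a\le b\le d$ and $k_{a+1}=\dots=k_{b}=1$; this forces $r=P_{a}-1$ and $s=P_{b}$, so these terms are in bijection with the index pairs $(a,b)$ appearing in the statement, and the middle factor becomes $(2\pi i)^{b-a+1}/(b-a+1)!$.

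For the outer factors I would use the two displayed identities preceding the definition. The prefix $w_{[1,P_{a}-1]}$ is the word encoding $(k_{1},\dots,k_{a})$, so $I_{{\rm dch}}(0';w_{[1,P_{a}-1]};1')=(-1)^{a}\zeta_{\shuffle}(k_{1},\dots,k_{a})$; the suffix $w_{[P_{b}+1,n]}$ is the reversed word encoding $(k_{d},\dots,k_{b+1})$, so the ${\rm dch}^{-1}$-identity gives $I_{{\rm dch}^{-1}}(1';w_{[P_{b}+1,n]};0')=(-1)^{(d-b)+(k_{b+1}+\dots+k_{d})}\zeta_{\shuffle}(k_{d},\dots,k_{b+1})$. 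Multiplying the three factors, inserting the prefactor $(-1)^{d}/(2\pi i)$ and simplifying, the power of $2\pi i$ drops to $(2\pi i)^{b-a}$ while the sign collapses to $(-1)^{b-a}(-1)^{k_{b+1}+\dots+k_{d}}$; since $(-1)^{b-a}(2\pi i)^{b-a}=(-2\pi i)^{b-a}$, this is exactly the summand of the proposition.

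It remains to dispose of the empty-middle terms $r=s$. Their total is $\sum_{w=uv}I_{{\rm dch}}(0';u;1')\,I_{{\rm dch}^{-1}}(1';v;0')=I_{{\rm dch}\cdot{\rm dch}^{-1}}(0';w;0')$ by the composition formula again, and this vanishes because ${\rm dch}\cdot{\rm dch}^{-1}$ is homotopic relative to its tangential endpoints to the trivial path, so all of its iterated integrals of nonempty words are $0$ (equivalently, the generating series satisfy $\Phi_{{\rm dch}}\Phi_{{\rm dch}^{-1}}=1$). As $w$ is nonempty, only the runs survive and the formula follows. I expect the one genuinely structural point to be this last cancellation; the validity of the composition formula for the regularized symbols $I_{\Gamma}$ and the sign/factorial bookkeeping are routine but must be carried out carefully.
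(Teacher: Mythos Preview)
Your proof is correct and follows essentially the same route as the paper's own argument: both apply the path-composition formula to $\beta={\rm dch}\cdot\alpha\cdot{\rm dch}^{-1}$, evaluate the middle $I_{\alpha}$ factor, re-index the surviving all-$1$ middle blocks by pairs $(a,b)$, plug in the ${\rm dch}$ and ${\rm dch}^{-1}$ identities for the outer factors, and dispose of the empty-middle contribution via $I_{{\rm dch}\cdot{\rm dch}^{-1}}=0$. The only cosmetic difference is that the paper removes the $r=s$ terms immediately after the first expansion while you postpone this to the end; the sign and factorial bookkeeping you sketch matches the paper's line by line.
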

\begin{proof}
Let $n=k_{1}+\cdots+k_{d}+1$ and $(a_{1},\dots,a_{n}):=(1,\overbrace{0,\dots,0}^{k_{1}},1,\dots,1,\overbrace{0,\dots,0}^{k_{d}},1)$.
Then from the path composition formula, we have
\begin{align*}
I_{\beta}(0';a_{1},\dots,a_{n};0')= & \sum_{0\leq l\leq m\leq n}I_{{\rm dch}}(0';a_{1},\dots,a_{l};1')I_{\alpha}(1';a_{l+1},\dots,a_{m};1')I_{{\rm dch}^{-1}}(1';a_{m+1},\dots,a_{n};0')\\
= & \sum_{\substack{0\leq l\leq m\leq n\\
a_{j}=1\ \text{for all }l<j\leq m
}
}\frac{(2\pi i)^{m-l}}{(m-l)!}I_{{\rm dch}}(0';a_{1},\dots,a_{l};1')I_{{\rm dch}^{-1}}(1';a_{m+1},\dots,a_{n};0').
\end{align*}
Here, again from the path composition formula, we have
\[
\sum_{\substack{0\leq l=m\leq n}
}I_{{\rm dch}}(0';a_{1},\dots,a_{l};1')I_{{\rm dch}^{-1}}(1';a_{m+1},\dots,a_{n};0')=I_{{\rm dch}\cdot{\rm dch}^{-1}}(0';a_{1},\dots,a_{n};0')=0.
\]
Thus
\begin{align*}
 & I_{\beta}(0';a_{1},\dots,a_{n};0')\\
= & \sum_{\substack{0\leq l<m\leq n\\
a_{j}=1\ \text{for all }l<j\leq m
}
}\frac{(2\pi i)^{m-l}}{(m-l)!}I_{{\rm dch}}(0';a_{1},\dots,a_{l};1')I_{{\rm dch}^{-1}}(1';a_{m+1},\dots,a_{n};0')\\
= & \sum_{\substack{0\leq a\leq b\leq d\\
k_{j}=1\ \text{for all }a<j\leq b
}
}\frac{(2\pi i)^{b-a+1}}{(b-a+1)!}I_{{\rm dch}}(0';1,\overbrace{0,\dots,0}^{k_{1}-1},1,\dots,1,\overbrace{0,\dots,0}^{k_{a}-1};1')I_{{\rm dch}^{-1}}(1';\overbrace{0,\dots,0}^{k_{b+1}-1},1,\dots,\overbrace{0,\dots,0}^{k_{d}-1}1;0')\\
= & \sum_{\substack{0\leq a\leq b\leq d\\
k_{j}=1\ \text{for all }a<j\leq b
}
}\frac{(2\pi i)^{b-a+1}}{(b-a+1)!}(-1)^{a+d-b+k_{b+1}+\cdots+k_{d}}\zeta_{\shuffle}(k_{1},\dots,k_{a})\zeta_{\shuffle}(k_{d},\dots,k_{b+1})\\
= & (-1)^{d}2\pi i\sum_{\substack{0\leq a\leq b\leq d\\
k_{j}=1\ \text{for all }a<j\leq b
}
}\frac{(-2\pi i)^{b-a}}{(b-a+1)!}(-1)^{k_{b+1}+\cdots+k_{d}}\zeta_{\shuffle}(k_{1},\dots,k_{a})\zeta_{\shuffle}(k_{d},\dots,k_{b+1}).
\end{align*}
Thus the claim is proved.\end{proof}
\begin{cor}
\label{cor:RSMZV_is_in_Z}We have $\zeta^{RS}(k_{1},\dots,k_{d})\in\mathcal{Z}[2\pi i]$.
\end{cor}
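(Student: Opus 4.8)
The plan is to read the result off directly from the explicit expression in Proposition~\ref{prop:RS_explicit}. First I would recall that $\mathcal{Z}[2\pi i]$ is by definition the $\mathcal{Z}$-module $\mathcal{Z}\oplus 2\pi i\mathcal{Z}$, and the one point worth checking at the outset is that this is genuinely closed under the powers of $2\pi i$ that occur. This follows because $(2\pi i)^{2}=-4\pi^{2}=-24\,\zeta(2)\in\mathcal{Z}$; in particular $\pi^{2}\in\mathcal{Z}$, so every even power $(2\pi i)^{2m}=(-4)^{m}(\pi^{2})^{m}$ lies in $\mathcal{Z}$, while every odd power $(2\pi i)^{2m+1}=(-4)^{m}(\pi^{2})^{m}\cdot 2\pi i$ lies in $2\pi i\mathcal{Z}$. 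The same is then true of $(-2\pi i)^{n}=(-1)^{n}(2\pi i)^{n}$, since the extra sign is rational.

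Next I would invoke the standard fact that each shuffle regularized multiple zeta value $\zeta_{\shuffle}(\Bbbk)$ lies in $\mathcal{Z}$ (it is expressible as a $\mathbb{Q}$-polynomial in convergent multiple zeta values). Because $\mathcal{Z}$ is a ring, the product $\zeta_{\shuffle}(k_{1},\dots,k_{a})\,\zeta_{\shuffle}(k_{d},\dots,k_{b+1})$ appearing in each summand of Proposition~\ref{prop:RS_explicit} is again an element of $\mathcal{Z}$, and multiplying by the rational coefficient $\frac{1}{(b-a+1)!}$ and by the sign $(-1)^{k_{b+1}+\cdots+k_{d}}$ keeps us inside $\mathcal{Z}$.

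Combining these two observations, each individual summand is a product of an element of $\mathcal{Z}$ with the power $(-2\pi i)^{b-a}$; hence it lies in $\mathcal{Z}$ when $b-a$ is even and in $2\pi i\mathcal{Z}$ when $b-a$ is odd, so in either case it lies in $\mathcal{Z}[2\pi i]$. Since $\mathcal{Z}[2\pi i]$ is closed under addition, the finite sum defining $\zeta^{RS}(k_{1},\dots,k_{d})$ lies in $\mathcal{Z}[2\pi i]$, which is the claim. I expect no real obstacle here: the argument is a parity bookkeeping on the powers of $2\pi i$, and the only thing one must not overlook is the elementary verification that $(2\pi i)^{2}\in\mathcal{Z}$, which is precisely what guarantees that $\mathcal{Z}\oplus 2\pi i\mathcal{Z}$ absorbs all the powers of $2\pi i$ occurring in the formula.
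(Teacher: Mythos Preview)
Your argument is correct and is exactly the (implicit) approach of the paper: the corollary is stated immediately after Proposition~\ref{prop:RS_explicit} with no separate proof, since the explicit formula there, together with $\pi^{2}=6\zeta(2)\in\mathcal{Z}$ and $\zeta_{\shuffle}(\Bbbk)\in\mathcal{Z}$, makes the containment in $\mathcal{Z}\oplus 2\pi i\mathcal{Z}$ immediate. You have simply written out the parity bookkeeping that the paper leaves to the reader.
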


\begin{cor}
\label{cor:RSMZV_is_lift_of_SMZV}$\zeta^{RS}(k_{1},\dots,k_{d})$
is a lift of $\zeta^{S}(k_{1},\dots,k_{d})$ i.e., 
\begin{align*}
\zeta^{RS}(k_{1},\dots,k_{d}) & \equiv\sum_{i=0}^{d}(-1)^{k_{i+1}+\cdots+k_{d}}\zeta_{\shuffle}(k_{1},\dots,k_{i})\zeta_{\shuffle}(k_{d},\dots,k_{i+1})\pmod{2\pi i\mathcal{Z}[2\pi i]}.
\end{align*}

\end{cor}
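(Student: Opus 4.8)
The plan is to deduce Corollary~\ref{cor:RSMZV_is_lift_of_SMZV} directly from the explicit formula in Proposition~\ref{prop:RS_explicit}, which we may assume. The key observation is that the definition of $\zeta^{S}(k_1,\dots,k_d)$ as a sum over $i=0,\dots,d$ is exactly the diagonal part $a=b=i$ of the double sum in Proposition~\ref{prop:RS_explicit}. So first I would isolate that diagonal contribution and show that every off-diagonal term lies in $2\pi i\,\mathcal{Z}[2\pi i]$.

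First I would write the formula of Proposition~\ref{prop:RS_explicit} as a sum over pairs $0\le a\le b\le d$ with $k_j=1$ for all $a<j\le b$, with coefficient $\frac{(-2\pi i)^{b-a}}{(b-a+1)!}$. For the diagonal terms where $b=a$, the emptiness condition on the constraint $\{a<j\le b\}$ is vacuous, the factor $(-2\pi i)^{b-a}=(-2\pi i)^{0}=1$ and $(b-a+1)!=1$, so each such term is simply
\[
(-1)^{k_{a+1}+\cdots+k_{d}}\zeta_{\shuffle}(k_{1},\dots,k_{a})\zeta_{\shuffle}(k_{d},\dots,k_{a+1}),
\]
and summing over $a=0,\dots,d$ reproduces precisely the right-hand side of the congruence, i.e.\ the defining formula of $\zeta^{S}$. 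For the off-diagonal terms where $b>a$, the coefficient carries a factor $(-2\pi i)^{b-a}$ with $b-a\ge 1$; since $\zeta_{\shuffle}(k_1,\dots,k_a)$ and $\zeta_{\shuffle}(k_d,\dots,k_{b+1})$ both lie in $\mathcal{Z}$ by standard properties of shuffle-regularized MZVs, each off-diagonal term lies in $2\pi i\,\mathcal{Z}[2\pi i]$.

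Putting these together, I would conclude that
\[
\zeta^{RS}(k_{1},\dots,k_{d})-\sum_{i=0}^{d}(-1)^{k_{i+1}+\cdots+k_{d}}\zeta_{\shuffle}(k_{1},\dots,k_{i})\zeta_{\shuffle}(k_{d},\dots,k_{i+1})
\]
equals the sum of the off-diagonal terms, which is an element of $2\pi i\,\mathcal{Z}[2\pi i]$, giving the claimed congruence. There is essentially no hard analytic step here: the whole content of the corollary is already packaged in Proposition~\ref{prop:RS_explicit}, and what remains is the bookkeeping of matching the diagonal $a=b$ part of that double sum with the definition of $\zeta^{S}$. The only point demanding a little care is confirming that the diagonal coefficient is exactly $1$ (so that no spurious power of $2\pi i$ survives) and that the vacuous constraint $k_j=1$ for $a<j\le b=a$ imposes no restriction, so that all $d+1$ diagonal terms genuinely appear; once this is checked the result is immediate.
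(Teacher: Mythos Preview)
Your proof is correct and follows exactly the approach implicit in the paper: the corollary is stated there without proof precisely because it is immediate from Proposition~\ref{prop:RS_explicit} by separating the $a=b$ terms from the $a<b$ terms, just as you do. Your bookkeeping of the diagonal coefficient and the vacuous constraint is accurate.
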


\subsection{Algebraic settings}

Let $\mathbb{Q}\langle e_{0},e_{1}\rangle$ be the free non-commutative
ring generated by formal symbols $e_{0}$ and $e_{1}$ over $\mathbb{Q}$.
Put 
\[
\mathfrak{h}:=e_{0}\mathbb{Q}\langle e_{0},e_{1}\rangle\oplus e_{1}\mathbb{Q}\langle e_{0},e_{1}\rangle\subset\mathbb{Q}\langle e_{0},e_{1}\rangle,
\]
\[
\mathfrak{h}^{0}:=e_{1}\mathbb{Q}\oplus e_{1}\mathbb{Q}\langle e_{0},e_{1}\rangle e_{1}.
\]
For $k_{1},\dots,k_{d}\in\mathbb{Z}_{\geq1}$, define $w(k_{1},\dots,k_{d})\in\mathfrak{h}^{0}$
by
\[
w(k_{1},\dots,k_{d})=(-1)^{d}e_{1}e_{0}^{k_{1}-1}e_{1}\cdots e_{1}e_{0}^{k_{d}-1}e_{1}.
\]
Note that $w(\emptyset)=e_{1}$ where $\emptyset=()$ is an empty
index, and that the elements $w(k_{1},\dots,k_{d})$ with $d\geq0$
and $k_{1},\dots,k_{d}\in\mathbb{Z}_{\geq1}$ form a basis of $\mathfrak{h}^{0}$.
We define a linear map $Z^{RS}:\mathfrak{h}\to\mathbb{C}$ by
\[
Z^{RS}(e_{a_{1}}\cdots e_{a_{k}})=\frac{1}{2\pi i}I_{\beta}(0';a_{1},\dots,a_{k};0').
\]
From the definition, $Z^{RS}(w(k_{1},\dots,k_{d}))=\zeta^{RS}(k_{1},\dots,k_{d})$.

\section{\label{sec:relations}Relations of refined symmetric multiple zeta
values}

\subsection{Double shuffle relations}

We define the shuffle product $\shuffle:\mathbb{Q}\langle e_{0},e_{1}\rangle\times\mathbb{Q}\langle e_{0},e_{1}\rangle\to\mathbb{Q}\langle e_{0},e_{1}\rangle$
by the recursion
\[
u\shuffle1=1\shuffle u=u,
\]
\[
e_{a}u\shuffle e_{b}u'=e_{a}(u\shuffle e_{b}u')+e_{b}(e_{a}u\shuffle u'),
\]
where $a,b\in\{0,1\}$ and $u,u'\in\mathbb{Q}\left\langle e_{0},e_{1}\right\rangle $,
and define the harmonic product $*:\mathfrak{h}^{0}\times\mathfrak{h}^{0}\to\mathfrak{h}^{0}$
by
\[
e_{1}*u=u*e_{1}=u,
\]
\begin{align*}
w(k_{1},\dots,k_{a})*w(l_{1},\dots,l_{b})= & -e_{1}e_{0}^{k_{1}-1}\left(w(k_{2},\dots,k_{a})*w(l_{1},\dots,l_{b})\right)\\
 & -e_{1}e_{0}^{l_{1}-1}\left(w(k_{1},\dots,k_{a})*w(l_{2},\dots,l_{b})\right)\\
 & +e_{1}e_{0}^{k_{1}+l_{1}-1}\left(w(k_{2},\dots,k_{a})*w(l_{2},\dots,l_{b})\right).
\end{align*}
For example, $w(k)*w(l)=w(k,l)+w(l,k)+w(k+l)$.
\begin{thm}[Double shuffle relations for RSMZVs]
\label{thm:double_shuffle}We have
\begin{itemize}
\item $Z^{RS}(u\shuffle v)=2\pi iZ^{RS}(u)Z^{RS}(v)$ for $u,v\in\mathfrak{h}$, 
\item $Z^{RS}(u*v)=Z^{RS}(u)Z^{RS}(v)$ for $u,v\in\mathfrak{h}^{0}$.
\end{itemize}
\end{thm}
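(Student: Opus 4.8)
The two relations are of quite different characters, so the plan is to prove them separately: the shuffle relation is essentially built into the integral definition, while the harmonic relation has no direct integral origin and must be transported from the series side, which is where the real work lies.

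For the shuffle relation I would use the shuffle product formula for regularized iterated integrals along the single fixed path $\beta$. Writing $F_w(t)$ for the truncated integral attached to a word $w$ as in the definition, one has the exact identity $F_u(t)F_v(t)=F_{u\shuffle v}(t)$ on $(0,\tfrac12)$, since the product of two integrals over $\{t<\cdots<1-t\}$ expands as the integral of the shuffle. Comparing the $\log t$-expansions and using that the constant term of a product of polynomials in $\log t$ is the product of the constant terms, we obtain $I_{\beta}(0';u\shuffle v;0')=I_{\beta}(0';u;0')\,I_{\beta}(0';v;0')$. Substituting the normalization $Z^{RS}(w)=\tfrac{1}{2\pi i}I_{\beta}(0';w;0')$ converts this into $2\pi i\,Z^{RS}(u\shuffle v)=(2\pi i)^2\,Z^{RS}(u)Z^{RS}(v)$, and dividing by $2\pi i$ gives the first bullet; the surviving factor of $2\pi i$ is exactly the normalization discrepancy.

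For the harmonic relation the obstacle is that $Z^{RS}$, being defined by an iterated integral, is adapted to the shuffle product and exhibits no series expansion, so harmonic multiplicativity is invisible from the integral. My plan is to construct a second, harmonic-side description of $Z^{RS}$ and transport the product from there. Concretely, I would introduce a harmonic analogue $Z^{RS}_{*}$, defined by the same symmetric-sum recipe as in Proposition~\ref{prop:RS_explicit} but with each shuffle-regularized factor $\zeta_{\shuffle}$ replaced by its harmonic-regularized counterpart $\zeta_{*}$ and the $2\pi i$-weights adjusted to the harmonic normalization. The first task is to verify that $Z^{RS}_{*}$ is multiplicative for $*$, i.e. $Z^{RS}_{*}(u*v)=Z^{RS}_{*}(u)Z^{RS}_{*}(v)$ on $\mathfrak{h}^{0}$. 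At the level modulo $\zeta(2)$ this is precisely the known harmonic product for symmetric multiple zeta values, and it rests on the fact that $\zeta_{*}$ is a homomorphism for $*$ together with the combinatorial compatibility of the index-reversal-with-sign and deconcatenation appearing in the symmetric sum with the quasi-shuffle structure; the refinement I need is the same statement carried out exactly in $\mathcal{Z}[2\pi i]$, keeping every $2\pi i$-correction.

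The decisive step is then the identity $Z^{RS}=Z^{RS}_{*}$ as maps $\mathfrak{h}^{0}\to\mathcal{Z}[2\pi i]$. Here I would invoke the Ihara--Kaneko--Zagier regularization theorem, which relates $\zeta_{\shuffle}$ and $\zeta_{*}$ through an explicit group-like generating series in the Riemann zeta values, insert this relation into both symmetric sums, and show that the discrepancy between the shuffle- and harmonic-regularized symmetric sums is cancelled exactly by the difference between the $2\pi i$-weights in Proposition~\ref{prop:RS_explicit} and those in its harmonic counterpart. I expect this matching of the regularization correction against the combinatorial $2\pi i$-factors, performed exactly rather than modulo $\zeta(2)$, to be the main obstacle. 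Once $Z^{RS}=Z^{RS}_{*}$ is established, the harmonic relation $Z^{RS}(u*v)=Z^{RS}(u)Z^{RS}(v)$ is inherited immediately from the multiplicativity of $Z^{RS}_{*}$, and the theorem follows.
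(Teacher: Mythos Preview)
Your treatment of the shuffle relation is essentially the same as the paper's: both arguments simply invoke the shuffle product formula for (regularized) iterated integrals along the fixed path $\beta$, and the extra $2\pi i$ is exactly the normalization $Z^{RS}=\tfrac{1}{2\pi i}I_{\beta}$. Your detour through the truncated integrals $F_w(t)$ and the $\log t$-expansion is a valid way to justify that the regularized integrals satisfy the shuffle identity, and this part needs no correction.

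For the harmonic relation, your high-level strategy is sound and is in fact the one the paper pursues, but the crucial step you flag as ``the main obstacle'' is left completely unspecified, and the object $Z^{RS}_{*}$ you propose is not the right one. Modifying Proposition~\ref{prop:RS_explicit} by replacing $\zeta_{\shuffle}$ with $\zeta_{*}$ and then guessing new ``adjusted $2\pi i$-weights'' gives an object whose $*$-multiplicativity is not at all clear: the weights $(-2\pi i)^{b-a}/(b-a+1)!$ arise from the exponential $\exp(2\pi i X_{1})$ in the middle of the generating series, and this structure does not interact well with the harmonic coproduct. The paper bypasses this by using a \emph{simpler} harmonic variant, namely $\zeta_{*}^{S}(\Bbbk;T_{1},T_{2})=\sum_{i}(-1)^{k_{i+1}+\cdots+k_{d}}\zeta_{*}(k_{1},\dots,k_{i};T_{1})\zeta_{*}(k_{d},\dots,k_{i+1};T_{2})$, with no middle weights at all. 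This is $*$-multiplicative for a one-line reason: it is the convolution product, in the quasi-shuffle Hopf algebra $(\mathbb{I},*,\Delta)$ with deconcatenation coproduct, of the two $*$-homomorphisms $\Bbbk\mapsto\zeta_{*}(\Bbbk;T_{1})$ and $\Bbbk\mapsto(-1)^{|\Bbbk|}\zeta_{*}(\overleftarrow{\Bbbk};T_{2})$, and a convolution of algebra maps in a bialgebra is an algebra map.

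The identification you then need is $\zeta^{RS}(\Bbbk)=\zeta_{*}^{S}(\Bbbk;-\tfrac{\pi i}{2},\tfrac{\pi i}{2})$, and here invoking the IKZ regularization theorem in the abstract is not enough: what actually makes the match work is Euler's reflection formula $\Gamma_{1}(-X_{1})^{-1}\Gamma_{1}(X_{1})^{-1}X_{1}=\tfrac{\sin(\pi X_{1})}{\pi}$, which converts the IKZ correction factors attached to $\zeta_{*}$ into the difference of exponentials $\exp(\pi i X_{1})-\exp(-\pi i X_{1})$. The paper packages this as Proposition~\ref{prop:L_double} (the identity $\tilde{L}(u*v;T)=\tfrac{1}{2\pi i}\tilde{L}(u;T)\tilde{L}(v;T)$ with $\tilde{L}(u;T)=L(u;T+\pi i)-L(u;T-\pi i)$), then specializes $T=\pi i$ and uses $L(u;0)=0$ for $u\in\mathfrak{h}$. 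Without this gamma-function ingredient, or an equivalent explicit computation, your ``matching of the regularization correction against the combinatorial $2\pi i$-factors'' remains a hope rather than an argument.
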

The first formula is an immediate consequence of the iterated integral
expression of RSMZVs, but the second one is not obvious from the definition.
We give a proof of Theorem \ref{thm:double_shuffle} in Section \ref{sub:Proof-of-double_shuffle}
in a more general setting.

\subsection{Duality and reversal formula}

We define an automorphism $\varphi$ and anti-automorphism $\tau$
of $\mathbb{Q}\langle e_{0},e_{1}\rangle$ by
\[
\varphi(e_{0})=e_{0}-e_{1},\ \varphi(e_{1})=-e_{1},
\]
\[
\tau(e_{0})=-e_{0},\ \tau(e_{1})=-e_{1}.
\]

\begin{thm}
\label{thm:duality_and_reversal}We have
\begin{itemize}
\item $Z^{RS}(\varphi(w))=-\overline{Z^{RS}(w)}$ for $w\in\mathfrak{h}^{0}$,
\item $Z^{RS}(\tau(w))=-\overline{Z^{RS}(w)}$ for $w\in\mathfrak{h}$.
\end{itemize}
\end{thm}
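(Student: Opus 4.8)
The plan is to derive both identities from the behaviour of the iterated integral $I_\beta$ under suitable (anti-)holomorphic changes of variables on $\mathbb{P}^1\setminus\{0,1,\infty\}$, using the dictionary $e_a\leftrightarrow\omega_a:=\frac{dz}{z-a}$ for $a\in\{0,1\}$. The basic mechanism is functoriality: if $T$ is a diffeomorphism of $\mathbb{P}^1\setminus\{0,1,\infty\}$ permuting the punctures and carrying tangential base points to tangential base points, then $I_{T\gamma}(Tx;w;Ty)=I_\gamma(x;T^\sharp w;y)$, where $T^\sharp$ is the ring (anti-)homomorphism induced by the pullback $T^*$ acting on $\{\omega_0,\omega_1\}$ and extended to words. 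I will also use the path-reversal formula $I_{\gamma^{-1}}(y;a_1,\dots,a_n;x)=(-1)^n I_\gamma(x;a_n,\dots,a_1;y)$ and the conjugation formula $\overline{I_\gamma(w)}=I_{\bar\gamma}(w)$, which holds because $\omega_0,\omega_1$ have real coefficients.

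For the reversal formula I would take $T=c$ the complex conjugation $z\mapsto\bar z$. Since $0,1$ and the tangent vector of $0'$ are real, $c$ fixes $0'$ and reverses the orientation of the loop, so $\bar\beta=\beta^{-1}$. Combining $\overline{I_\beta(e_{a_1}\cdots e_{a_n})}=I_{\beta^{-1}}(e_{a_1}\cdots e_{a_n})=(-1)^n I_\beta(e_{a_n}\cdots e_{a_1})$ with $\overline{2\pi i}=-2\pi i$ and with $\tau(e_{a_1}\cdots e_{a_n})=(-1)^n e_{a_n}\cdots e_{a_1}$ gives $Z^{RS}(\tau(w))=-\overline{Z^{RS}(w)}$ at once for every word, hence for all $w\in\mathfrak{h}$.

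For the duality I would use the Möbius involution $f(z)=z/(z-1)$, which fixes $0$ and interchanges $1$ and $\infty$. A direct computation gives $f^*\omega_0=\omega_0-\omega_1$ and $f^*\omega_1=-\omega_1$, so $f^\sharp=\varphi$ and therefore $I_\beta(\varphi(w))=I_{f\beta}(w)$. Since $\overline{2\pi i}=-2\pi i$, the desired identity $Z^{RS}(\varphi(w))=-\overline{Z^{RS}(w)}$ is equivalent to $I_{f\beta}(w)=\overline{I_\beta(w)}=I_{\beta^{-1}}(w)$ for $w\in\mathfrak{h}^0$, and this is where all the work lies.

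The main obstacle is that $f$ does not preserve the base point: it sends $0'=1_0$ to $(-1)_0$ (the derivative $f'(0)=-1$ flips the tangent direction) and it turns the loop $\beta$ around $1$ into a loop around $\infty$. I would repair the base point by conjugating with a half-turn $\kappa$ around $0$ from $0'$ to $(-1)_0$, writing the loop $\kappa(f\beta)\kappa^{-1}$ based at $0'$ and expanding it by the path composition formula as in the proof of Proposition~\ref{prop:RS_explicit}. Because $\kappa$ stays near $0$, the factors $I_\kappa$ and $I_{\kappa^{-1}}$ vanish on every word except powers of $e_0$; since a word in $\mathfrak{h}^0$ begins and ends with $e_1$, only the trivial splitting survives and $I_{f\beta}(w)=I_{\kappa(f\beta)\kappa^{-1}}(w)$. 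It then remains to check the homotopy statement that, in $\pi_1(\mathbb{P}^1\setminus\{0,1,\infty\},0')$, the class $\kappa(f\beta)\kappa^{-1}$ agrees with $\beta^{-1}$ up to multiplication by loops around $0$; feeding this into the composition formula once more, the residual loops around $0$ again contribute only through $e_0$ and hence die against the outer $e_1$'s, leaving $I_{f\beta}(w)=I_{\beta^{-1}}(w)$. The delicate points—fixing the orientation of the loop around $\infty$, tracking the tangent flip at $0$, and pinning down the exact residual power of the loop around $0$—are precisely the bookkeeping I expect to be hardest, and the hypothesis $w\in\mathfrak{h}^0$ is exactly what forces all regularization corrections to vanish. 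As a sanity check, Proposition~\ref{prop:RS_explicit} gives $\zeta^{RS}(2)=2\zeta(2)$ and $\zeta^{RS}(1,1)=-4\zeta(2)$, whence $Z^{RS}(\varphi(e_1e_0e_1))=-\zeta^{RS}(2)-\zeta^{RS}(1,1)=2\zeta(2)=\overline{\zeta^{RS}(2)}$, consistent with the duality.
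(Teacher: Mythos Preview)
Your approach is correct and is exactly the one the paper indicates: the paper's proof consists of the two sentences ``The first formula follows from the M\"obius transformation $t\mapsto t/(t-1)$. The second formula follows from the reversal formula of iterated integrals.'' Your treatment of the reversal part (complex conjugation sending $\beta$ to $\beta^{-1}$, then path reversal) and of the duality part (pullback by $f(z)=z/(z-1)$ realizing $\varphi$, repair of the tangential base point, and the observation that the residual corrections live in $e_0$-only words and die against the outer $e_1$'s of $\mathfrak{h}^0$) is precisely the unpacking those two sentences require; the paper simply leaves all of that implicit.
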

\begin{proof}
The first formula follows from the M\"{o}bius transformation $t\mapsto\frac{t}{t-1}$.
The second formula follows from the reversal formula of iterated integrals.
\end{proof}

\section{\label{sec:Other_variants}Relation to other versions of symmetric
multiple zeta values}

For $n\in\mathbb{Z}$, put $\beta_{n}={\rm dch}\cdot\alpha^{n}\cdot{\rm dch}^{-1}\in\pi_{1}(\mathbb{C}\setminus\{0,1\},0',0')$.
Define a linear map $L_{n}:\mathbb{Q}\left\langle e_{0},e_{1}\right\rangle \to\mathbb{C}$
by
\[
L_{n}(e_{a_{1}}\cdots e_{a_{k}}):=I_{\beta_{n}}(0';a_{1},\dots,a_{k};0').
\]
For $w\in\mathbb{Q}\left\langle e_{0},e_{1}\right\rangle $, let $L(w;T)\in\mathbb{C}[T]$
be a unique polynomial of $T$ such that
\[
L_{n}(w)=L(w;2\pi in).
\]
We see the existence of such a polynomial as in the proof of Proposition
\ref{prop:RS_explicit} by using the path composition formula. From
the definition, for $w\in\mathfrak{h}$ we have 
\[
Z^{RS}(w)=\frac{1}{2\pi i}L(w;2\pi i).
\]
We can consider many variants of lifts of symmetric multiple zeta
values. In this section, we express such variants by using $L(w;T)$.

\subsection{Generating functions}

For an index $\Bbbk$, we denote by $\zeta_{\shuffle}(\Bbbk;T)\in\mathbb{R}[T]$
(resp. $\zeta_{*}(\Bbbk;T)\in\mathbb{R}[T]$) the shuffle (resp. harmonic)
regularized multiple zeta values with $T$, which are characterized
by the shuffle (resp. harmonic) product identity and $\zeta_{\shuffle}(1;T)=\zeta_{*}(1;T)=T$.
For $d\geq0$ and $k_{1},\dots,k_{d}\in\mathbb{Z}_{\geq1}$, we put
\[
\zeta_{\shuffle}^{S}(k_{1},\dots,k_{d};T_{1},T_{2}):=\sum_{i=0}^{d}(-1)^{k_{d}+\cdots+k_{i+1}}\zeta_{\shuffle}(k_{1},\dots,k_{i};T_{1})\zeta_{\shuffle}(k_{d},\dots,k_{i+1};T_{2}),
\]
\[
\zeta_{*}^{S}(k_{1},\dots,k_{d};T_{1},T_{2}):=\sum_{i=0}^{d}(-1)^{k_{d}+\cdots+k_{i+1}}\zeta_{*}(k_{1},\dots,k_{i};T_{1})\zeta_{*}(k_{d},\dots,k_{i+1};T_{2}).
\]
Let $R=\mathbb{Q}\langle\langle X_{0},X_{1}\rangle\rangle$. Put $\Gamma_{1}(t)=\exp(\sum_{k=2}^{\infty}\frac{\zeta(k)}{k}(-t)^{k})\in\mathbb{R}[[t]]$.
Define an anti-automorphism $\epsilon:R\to R$ by $\epsilon(X_{a})=-X_{a}$.
Put 
\begin{align*}
\Phi_{\shuffle}(T) & :=\sum_{k=0}^{\infty}\sum_{a_{1},\dots,a_{k}\in\{0,1\}}I_{{\rm dch}}(0';a_{1},\dots,a_{k};1')X_{a_{1}}\cdots X_{a_{k}}\exp(-TX_{1})\\
\Phi_{*}(T) & :=\Phi_{\shuffle}(T)\Gamma_{1}(-X_{1})^{-1}\\
\Phi_{\shuffle}^{S}(T_{1},T_{2}) & :=\Phi_{\shuffle}(T_{1})X_{1}\epsilon(\Phi_{\shuffle}(T_{2}))\\
\Phi_{*}^{S}(T_{1},T_{2}) & :=\Phi_{*}(T_{1})X_{1}\epsilon(\Phi_{*}(T_{2}))\\
\Phi^{RS} & :=\sum_{k=1}^{\infty}\sum_{a_{1},\dots,a_{k}\in\{0,1\}}Z^{RS}(e_{a_{1}}\cdots e_{a_{k}})X_{a_{1}}\cdots X_{a_{k}}\\
\Phi^{L}(T) & :=\sum_{k=0}^{\infty}\sum_{a_{1},\dots,a_{k}\in\{0,1\}}L(e_{a_{1}}\cdots e_{a_{k}};T)X_{a_{1}}\cdots X_{a_{k}}.
\end{align*}
We denote by ${\rm coeff}(f,X_{a_{1}}\cdots X_{a_{k}})$ the coefficient
of $X_{a_{1}}\cdots X_{a_{k}}$ in $f$. The following formulas are
essentially proved in \cite[Proposition 10 and Theorem 1]{IKZ}.
\begin{align*}
{\rm coeff}(\Phi_{\shuffle}(T),X_{1}X_{0}^{k_{1}-1}\cdots X_{1}X_{0}^{k_{d}-1}) & =(-1)^{d}\zeta_{\shuffle}(k_{1},\dots,k_{d};T)\\
{\rm coeff}(\Phi_{*}(T),X_{1}X_{0}^{k_{1}-1}\cdots X_{1}X_{0}^{k_{d}-1}) & =(-1)^{d}\zeta_{*}(k_{1},\dots,k_{d};T).
\end{align*}
Thus from the definition, we have
\begin{align*}
{\rm coeff}(\Phi_{\shuffle}^{S}(T_{1},T_{2}),X_{1}X_{0}^{k_{1}-1}\cdots X_{1}X_{0}^{k_{d}-1}X_{1}) & =(-1)^{d}\zeta_{\shuffle}^{S}(k_{1},\dots,k_{d};T_{1},T_{2})\\
{\rm coeff}(\Phi_{*}^{S}(T_{1},T_{2}),X_{1}X_{0}^{k_{1}-1}\cdots X_{1}X_{0}^{k_{d}-1}X_{1}) & =(-1)^{d}\zeta_{*}^{S}(k_{1},\dots,k_{d};T_{1},T_{2}).
\end{align*}

\begin{thm}
We have
\[
\Phi_{\shuffle}^{S}(T_{1},T_{2})=\left.\frac{d}{dT}\Phi^{L}(T)\right|_{T=-T_{1}+T_{2}},
\]
\[
\Phi_{*}^{S}(T_{1},T_{2})=\frac{\Phi^{L}(\pi i-T_{1}+T_{2})-\Phi^{L}(-\pi i-T_{1}+T_{2})}{2\pi i},
\]
\begin{equation}
\Phi^{RS}=\frac{\Phi^{L}(2\pi i)-\Phi(0)}{2\pi i}\ \ \ \ \ (=\frac{\Phi^{L}(2\pi i)-1}{2\pi i}),\label{eq:PhiRS_by_PhiL}
\end{equation}
\[
\Phi^{RS}=\frac{1}{2\pi i}\int_{0}^{2\pi i}\Phi_{\shuffle}^{S}(0,T)dT,
\]
\[
\Phi^{RS}=\Phi_{*}^{S}(-\frac{\pi i}{2},\frac{\pi i}{2}).
\]
\end{thm}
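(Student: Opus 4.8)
The plan is to reduce all five identities to a single factorization of the generating series $\Phi^{L}(T)$ coming from the path decomposition $\beta_{n}={\rm dch}\cdot\alpha^{n}\cdot{\rm dch}^{-1}$. Write $\Phi_{{\rm dch}}=\Phi_{\shuffle}(0)=\sum I_{{\rm dch}}(0';a_{1},\dots,a_{k};1')X_{a_{1}}\cdots X_{a_{k}}$. First I would record three facts. By the path composition formula, for each $n\in\mathbb{Z}$ the generating series of $\beta_{n}$ factors as the product of the generating series of the three constituent paths, and since $L_{n}(w)=I_{\beta_{n}}(0';\cdots;0')$ this reads $\Phi^{L}(2\pi in)=\Phi_{{\rm dch}}\cdot\Phi_{\alpha^{n}}\cdot\Phi_{{\rm dch}^{-1}}$. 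From $I_{\alpha}(1';a_{1},\dots,a_{m};1')$ being $(2\pi i)^{m}/m!$ when all $a_{j}=1$ and $0$ otherwise, together with $\Phi_{\alpha^{n}}=\Phi_{\alpha}^{n}$, one gets $\Phi_{\alpha^{n}}=\exp(2\pi inX_{1})$. The reversal formula $I_{{\rm dch}^{-1}}(1';a_{1},\dots,a_{k};0')=(-1)^{k}I_{{\rm dch}}(0';a_{k},\dots,a_{1};1')$ gives $\Phi_{{\rm dch}^{-1}}=\epsilon(\Phi_{{\rm dch}})$. Combining these, $\Phi^{L}(2\pi in)=\Phi_{{\rm dch}}\exp(2\pi inX_{1})\epsilon(\Phi_{{\rm dch}})$ for every integer $n$; as both sides are, in each fixed word-degree, polynomials in $T=2\pi in$ agreeing at infinitely many points, I obtain the key identity $\Phi^{L}(T)=\Phi_{{\rm dch}}\exp(TX_{1})\epsilon(\Phi_{{\rm dch}})$.

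From $\Phi_{\shuffle}(T)=\Phi_{{\rm dch}}\exp(-TX_{1})$ I would substitute $\Phi_{{\rm dch}}=\Phi_{\shuffle}(T_{1})\exp(T_{1}X_{1})$ and, using that $\epsilon$ is an anti-automorphism with $\epsilon(X_{1})=-X_{1}$, $\epsilon(\Phi_{{\rm dch}})=\exp(-T_{2}X_{1})\epsilon(\Phi_{\shuffle}(T_{2}))$. Since $X_{1}$ commutes with every power series in $X_{1}$, the key identity becomes
\[
\Phi^{L}(T)=\Phi_{\shuffle}(T_{1})\exp((T_{1}+T-T_{2})X_{1})\epsilon(\Phi_{\shuffle}(T_{2})).
\]
Differentiating in $T$ and setting $T=-T_{1}+T_{2}$ collapses the exponential to $1$ and leaves a single $X_{1}$, reproducing $\Phi_{\shuffle}(T_{1})X_{1}\epsilon(\Phi_{\shuffle}(T_{2}))=\Phi_{\shuffle}^{S}(T_{1},T_{2})$, the first formula. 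The third formula is then essentially definitional: since $Z^{RS}(w)=\frac{1}{2\pi i}L(w;2\pi i)$ and the key identity at $T=0$ gives $\Phi^{L}(0)=\Phi_{{\rm dch}}\epsilon(\Phi_{{\rm dch}})=\Phi_{{\rm dch}\cdot{\rm dch}^{-1}}=1$ (matching the parenthetical, where $\Phi(0)$ means $\Phi^{L}(0)$), stripping the length-zero term yields $\Phi^{RS}=(\Phi^{L}(2\pi i)-1)/(2\pi i)$. The fourth formula is then immediate from the first and third by termwise (polynomial) integration: $\int_{0}^{2\pi i}\Phi_{\shuffle}^{S}(0,T)\,dT=\int_{0}^{2\pi i}\frac{d}{dT}\Phi^{L}(T)\,dT=\Phi^{L}(2\pi i)-\Phi^{L}(0)$.

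The second formula is the substantive one, and I expect its harmonic-regularization bookkeeping to be the main obstacle. Using $\Phi_{*}(T)=\Phi_{\shuffle}(T)\Gamma_{1}(-X_{1})^{-1}$ and $\epsilon(\Gamma_{1}(-X_{1})^{-1})=\Gamma_{1}(X_{1})^{-1}$ (all such factors lie in the commutative subalgebra $\mathbb{Q}[[X_{1}]]$), I would reduce $\Phi_{*}^{S}(T_{1},T_{2})=\Phi_{*}(T_{1})X_{1}\epsilon(\Phi_{*}(T_{2}))$ to
\[
\Phi_{*}^{S}(T_{1},T_{2})=\Phi_{\shuffle}(T_{1})\,X_{1}\,\Gamma_{1}(-X_{1})^{-1}\Gamma_{1}(X_{1})^{-1}\,\epsilon(\Phi_{\shuffle}(T_{2})).
\]
The crux is the evaluation $\Gamma_{1}(X_{1})\Gamma_{1}(-X_{1})=\pi X_{1}/\sin(\pi X_{1})$, which follows from the definition of $\Gamma_{1}$ together with the Euler product $\sin(\pi z)/(\pi z)=\prod_{n\geq1}(1-z^{2}/n^{2})$, giving $\sum_{j\geq1}\frac{\zeta(2j)}{j}z^{2j}=\log\frac{\pi z}{\sin\pi z}$. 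Hence $X_{1}\Gamma_{1}(-X_{1})^{-1}\Gamma_{1}(X_{1})^{-1}=\frac{\sin\pi X_{1}}{\pi}=\frac{\exp(\pi iX_{1})-\exp(-\pi iX_{1})}{2\pi i}$. Inserting this and recognizing each of $\Phi_{\shuffle}(T_{1})\exp(\pm\pi iX_{1})\epsilon(\Phi_{\shuffle}(T_{2}))$ as $\Phi^{L}(\pm\pi i-T_{1}+T_{2})$ via the displayed rewriting yields the second formula. Finally the fifth formula drops out by setting $T_{1}=-\pi i/2$ and $T_{2}=\pi i/2$ in the second: then $\pm\pi i-T_{1}+T_{2}$ equals $2\pi i$ and $0$, and $\Phi^{L}(0)=1$ returns $(\Phi^{L}(2\pi i)-1)/(2\pi i)=\Phi^{RS}$.
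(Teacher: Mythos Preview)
Your proof is correct and follows essentially the same route as the paper: both hinge on the path-composition factorization $\Phi^{L}(T)=\Phi_{\shuffle}(0)\exp(TX_{1})\epsilon(\Phi_{\shuffle}(0))$, then derive the first formula by differentiation and the second via the reflection identity $\Gamma_{1}(X_{1})\Gamma_{1}(-X_{1})=\pi X_{1}/\sin(\pi X_{1})$. The only differences are expository: you spell out the polynomial-interpolation argument for passing from $T=2\pi in$ to arbitrary $T$, and you derive the fourth and fifth formulas explicitly, whereas the paper simply notes they follow from the first three together with the definitional third.
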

\begin{proof}
It is enough to prove the first and second formulas since the third
formula is obvious from the definition and the fourth and fifth formulas
are consequences of first three formulas. From the path composition
formula, we have
\begin{equation}
\Phi^{L}(T)=\Phi_{\shuffle}(0)\exp(TX_{1})\epsilon(\Phi_{\shuffle}(0)).\label{eq:PhiL_by_PhiShuffle}
\end{equation}
Thus the first and second formulas are proved as follows. For the
first, we compute
\begin{align*}
\Phi_{\shuffle}^{S}(T_{1},T_{2})= & \Phi_{\shuffle}(T_{1})X_{1}\epsilon(\Phi_{\shuffle}(T_{2}))\\
= & \Phi_{\shuffle}(0)\exp(-T_{1}X_{1})X_{1}\epsilon(\Phi_{\shuffle}(0)\exp(-T_{2}X_{1}))\\
= & \Phi_{\shuffle}(0)\exp((-T_{1}+T_{2})X_{1})X_{1}\epsilon(\Phi_{\shuffle}(0))\\
= & \left.\frac{d}{dT}\Phi^{L}(T)\right|_{T=-T_{1}+T_{2}}.
\end{align*}
Here, we have used $\Phi_{\shuffle}(T)=\Phi_{\shuffle}(0)\exp(-TX_{1})$
for the second equality and (\ref{eq:PhiL_by_PhiShuffle}) for the
last equality. For the second one, we compute similarly using \ref{eq:PhiL_by_PhiShuffle}
and the classical formula for the gamma function as 
\begin{align*}
\Phi_{*}^{S}(T_{1},T_{2})= & \Phi_{*}(T_{1})X_{1}\tau(\Phi_{\shuffle}(T_{2}))\\
= & \Phi_{\shuffle}(0)\Gamma_{1}(-X_{1})^{-1}\exp(-T_{1}X_{1})X_{1}\epsilon(\Phi_{\shuffle}(0)\Gamma_{1}(-X_{1})^{-1}\exp(-T_{2}X_{1}))\\
= & \Phi_{\shuffle}(0)\Gamma_{1}(-X_{1})^{-1}\Gamma_{1}(X_{1})^{-1}X_{1}\exp(-(T_{1}-T_{2})X_{1})\epsilon(\Phi_{\shuffle}(0))\\
= & \Phi_{\shuffle}(0)\frac{\sin(\pi X_{1})}{\pi}\exp((-T_{1}+T_{2})X_{1})\epsilon(\Phi_{\shuffle}(0))\\
= & \frac{\Phi^{L}(\pi i-T_{1}+T_{2})-\Phi^{L}(-\pi i-T_{1}+T_{2})}{2\pi i}.
\end{align*}

\end{proof}
Comparing the coefficients of the identities in the theorem, we get
the following corollary.
\begin{cor}
\label{cor:RS_and_other_variants}We have
\[
\zeta_{\shuffle}^{S}((k_{1},\dots,k_{d});T_{1},T_{2})=\left.\frac{d}{dT}L(w(k_{1},\dots,k_{d});T)\right|_{T=-T_{1}+T_{2}},
\]
\begin{equation}
\zeta_{*}^{S}(k_{1},\dots,k_{d};T_{1},T_{2})=\frac{L(w(k_{1},\dots,k_{d});\pi i-T_{1}+T_{2})-L(w(k_{1},\dots,k_{d});-\pi i-T_{1}+T_{2})}{2\pi i},\label{eq:ZetaSh_by_L}
\end{equation}
\[
\zeta^{RS}(k_{1},\dots,k_{d})=\frac{1}{2\pi i}\int_{0}^{2\pi i}\zeta_{\shuffle}^{S}(k_{1},\dots,k_{d};0,T)dT,
\]

\[
\zeta^{RS}(k_{1},\dots,k_{d})=\zeta_{*}^{S}(k_{1},\dots,k_{d};-\frac{\pi i}{2},\frac{\pi i}{2}).
\]
\end{cor}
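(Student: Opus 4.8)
The plan is to read off each of the four asserted identities by extracting, from the corresponding generating-function identity in the preceding theorem, the coefficient of the single word $X_1X_0^{k_1-1}\cdots X_1X_0^{k_d-1}X_1$. The ingredients are already in place: the coefficient formulas recorded just before the theorem give ${\rm coeff}(\Phi_{\shuffle}^{S}(T_1,T_2),X_1X_0^{k_1-1}\cdots X_1X_0^{k_d-1}X_1)=(-1)^d\zeta_{\shuffle}^{S}(k_1,\dots,k_d;T_1,T_2)$ and the analogous statement for $\Phi_{*}^{S}$, while by definition ${\rm coeff}(\Phi^{L}(T),e_{a_1}\cdots e_{a_k})=L(e_{a_1}\cdots e_{a_k};T)$ and ${\rm coeff}(\Phi^{RS},e_{a_1}\cdots e_{a_k})=Z^{RS}(e_{a_1}\cdots e_{a_k})$. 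The one translation I need is that, under $X_a\leftrightarrow e_a$, the word $X_1X_0^{k_1-1}\cdots X_1X_0^{k_d-1}X_1$ equals $(-1)^d w(k_1,\dots,k_d)$; hence its coefficient in $\Phi^{L}(T)$ is $(-1)^d L(w(k_1,\dots,k_d);T)$ and its coefficient in $\Phi^{RS}$ is $(-1)^d Z^{RS}(w(k_1,\dots,k_d))=(-1)^d\zeta^{RS}(k_1,\dots,k_d)$.

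With this dictionary, I would apply the $\mathbb{C}$-linear functional ${\rm coeff}(-,X_1X_0^{k_1-1}\cdots X_1X_0^{k_d-1}X_1)$ to each side of the theorem's identities. For the first identity this turns $\Phi_{\shuffle}^{S}(T_1,T_2)=\frac{d}{dT}\Phi^{L}(T)|_{T=-T_1+T_2}$ into $(-1)^d\zeta_{\shuffle}^{S}(\dots;T_1,T_2)=(-1)^d\frac{d}{dT}L(w;T)|_{T=-T_1+T_2}$; the second into $(-1)^d\zeta_{*}^{S}(\dots;T_1,T_2)=(-1)^d\frac{L(w;\pi i-T_1+T_2)-L(w;-\pi i-T_1+T_2)}{2\pi i}$; the fourth into $(-1)^d\zeta^{RS}=\frac{1}{2\pi i}\int_0^{2\pi i}(-1)^d\zeta_{\shuffle}^{S}(\dots;0,T)\,dT$; and the fifth into $(-1)^d\zeta^{RS}=(-1)^d\zeta_{*}^{S}(\dots;-\frac{\pi i}{2},\frac{\pi i}{2})$. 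Cancelling the common factor $(-1)^d$ from each then produces exactly the four displayed formulas. The theorem's third identity is omitted here, since extracting its coefficient merely reproduces the already-noted relation $Z^{RS}(w)=\frac{1}{2\pi i}L(w;2\pi i)$.

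The only point that needs a word of justification — and hence the closest thing to an obstacle — is that the coefficient functional commutes with the operations in $T$ appearing on the right-hand sides, namely differentiation $\frac{d}{dT}$, evaluation at a point, the finite difference, and the contour integral $\int_0^{2\pi i}(-)\,dT$. This is legitimate because each coefficient of $\Phi^{L}(T)$ is a \emph{polynomial} in $T$: indeed $\Phi^{L}(T)=\Phi_{\shuffle}(0)\exp(TX_1)\epsilon(\Phi_{\shuffle}(0))$, so after fixing a word the $T$-dependence is entire (in fact polynomial), whence differentiation, evaluation, and term-by-term integration all pass through the extraction of a fixed coefficient. Once this is granted the argument is pure bookkeeping, the main thing to watch being the uniform sign $(-1)^d$, which cancels in every case.
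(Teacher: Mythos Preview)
Your proposal is correct and follows exactly the paper's approach: the paper simply states that the corollary is obtained by ``comparing the coefficients of the identities in the theorem,'' and you have spelled out precisely this coefficient extraction, including the sign bookkeeping and the (valid) observation that the coefficient functional commutes with the polynomial-in-$T$ operations involved.
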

\begin{rem}
\label{rem:BTT}In \cite{BTT2017}, Bachmann, Takeyama and Tasaka
introduced complex numbers $\xi(\Bbbk)$ as limits of certain finite
multiple harmonic $q$-series. They also prove the equation 
\begin{align*}
\xi(k_{d},\dots,k_{1}) & =\sum_{a=0}^{d}(-1)^{k_{d}+\cdots+k_{a+1}}\zeta_{*}(k_{1},\dots,k_{a};-\frac{\pi i}{2})\zeta_{*}(k_{d},\dots,k_{a+1};\frac{\pi i}{2})\\
 & (=\zeta_{*}^{S}(k_{1},\dots,k_{d};-\frac{\pi i}{2},\frac{\pi i}{2}))
\end{align*}
(see \cite[Theorem 2.6, (2.15)]{BTT2017}). Thus we have
\[
\xi(k_{d},\dots,k_{1})=\zeta^{RS}(k_{1},\dots,k_{d})
\]
from the last formula of Corollary \ref{cor:RS_and_other_variants}.
\end{rem}

\subsection{\label{sub:Proof-of-double_shuffle}Proof of double shuffle relations}
\begin{prop}
\label{prop:L_double}We have
\begin{itemize}
\item $L(u\shuffle v;T)=L(u;T)L(v;T)$ for all $u,v\in\mathbb{Q}\left\langle e_{0},e_{1}\right\rangle $,
\item $\tilde{L}(u*v;T)=\frac{1}{2\pi i}\tilde{L}(u;T)\tilde{L}(v;T)$ for
all $u,v\in\mathfrak{h}^{0}$ where we put $\tilde{L}(u;T):=L(u;T+\pi i)-L(u;T-\pi i)$. 
\end{itemize}
\end{prop}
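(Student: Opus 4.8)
The plan is to prove both formulas by relating $L(w;T)$ to the shuffle generating function $\Phi_{\shuffle}$ and then leveraging the classical double shuffle structure for multiple zeta values. The crucial input is the factorization
\[
\Phi^{L}(T)=\Phi_{\shuffle}(0)\exp(TX_{1})\epsilon(\Phi_{\shuffle}(0))
\]
established earlier (equation~(\ref{eq:PhiL_by_PhiShuffle})), which expresses the whole $L$-generating series in terms of the single series $\Phi_{\shuffle}(0)$ and a group-like exponential factor.

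For the first (shuffle) formula, I would argue at the level of generating functions. Since $\Phi_{\shuffle}(0)$ is group-like for the shuffle Hopf algebra (it is the value of a flat connection / Chen iterated integral series along ${\rm dch}$) and $\epsilon$ is the antipode-type anti-automorphism sending $X_a\mapsto -X_a$, the product $\Phi_{\shuffle}(0)\epsilon(\Phi_{\shuffle}(0))$ is again group-like; inserting $\exp(TX_1)$, which is itself group-like, preserves this. Hence $\Phi^{L}(T)$ is group-like for every fixed $T$, and group-likeness is exactly equivalent to the statement that its coefficients satisfy the shuffle product rule, i.e. $L(u\shuffle v;T)=L(u;T)L(v;T)$. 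Concretely, I would spell out that for a group-like series $\Phi=\sum_w c_w X_w$ one has $c_{u}c_{v}=\sum_{w}(\text{coeff of }w\text{ in }u\shuffle v)\,c_{w}$, and apply this to $\Phi^{L}(T)$. This step is essentially formal once group-likeness is in hand.

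For the second (harmonic) formula, the strategy is to pass from $L$ to $\tilde L$ and recognize $\tilde L$ as essentially the harmonic generating function $\Phi_{*}^{S}$. Indeed, from~(\ref{eq:ZetaSh_by_L}) one reads off that $\tilde{L}(u;T)=L(u;T+\pi i)-L(u;T-\pi i)$ recovers, up to the factor $2\pi i$, the coefficients of $\Phi_{*}^{S}$ evaluated at a suitable shift. Using the computation in the preceding theorem,
\[
\Phi^{L}(\pi i+S)-\Phi^{L}(-\pi i+S)=\Phi_{\shuffle}(0)\,\frac{\sin(\pi X_{1})}{\pi}\,\exp(SX_{1})\,\epsilon(\Phi_{\shuffle}(0)),
\]
so the $\tilde L$-series factors as $\Phi_{\shuffle}(0)\Gamma_{1}(-X_1)^{-1}\Gamma_{1}(X_1)^{-1}X_1\exp(SX_1)\epsilon(\Phi_{\shuffle}(0))$, which is (up to the scalar-in-$X_1$ correction) precisely $\Phi_{*}(0)X_1\epsilon(\Phi_{*}(0))$ with the exponential shift. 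The point is that $\Phi_{*}(0)$ is group-like for the \emph{harmonic} (stuffle/quasi-shuffle) coproduct, since the $\Gamma_1$-factor implements the standard shuffle-to-harmonic regularization conversion of Ihara--Kaneko--Zagier. Therefore the coefficients of $\tilde L$ obey the harmonic product rule, with the overall factor of $2\pi i$ arising from the $\frac{\sin(\pi X_1)}{\pi}$ normalization and the difference quotient; this yields $\tilde{L}(u*v;T)=\frac{1}{2\pi i}\tilde{L}(u;T)\tilde{L}(v;T)$.

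The main obstacle is bookkeeping the harmonic group-likeness cleanly and tracking the $2\pi i$ factor. Group-likeness for the shuffle structure is standard, but for the harmonic product one must invoke the precise statement that $\Phi_{*}(0)=\Phi_{\shuffle}(0)\Gamma_{1}(-X_1)^{-1}$ is group-like for the harmonic coproduct on $\mathfrak{h}^{0}$, and that the definition of $*$ given in the paper (with the sign-twisted $w(\Bbbk)=(-1)^d e_1 e_0^{k_1-1}\cdots$) matches this coproduct. I would need to verify that the anti-automorphism $\epsilon$ and the factor $\frac{\sin(\pi X_1)}{\pi}=\Gamma_1(-X_1)^{-1}\Gamma_1(X_1)^{-1}X_1$ conspire so that the product $\tilde L(u;T)\tilde L(v;T)$ produces exactly $2\pi i\,\tilde L(u*v;T)$ rather than some other power of $2\pi i$; confirming this normalization against the $\Phi_{*}^{S}$ coefficient identity is where the real care lies, and I would anchor it by testing the depth-one/depth-two cases such as $w(k)*w(l)=w(k,l)+w(l,k)+w(k+l)$ against the explicit series expansion.
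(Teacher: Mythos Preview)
Your overall strategy is sound and aligns with the paper's, but the two parts deserve separate comments.

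For the shuffle identity, your route via the factorization $\Phi^{L}(T)=\Phi_{\shuffle}(0)\exp(TX_{1})\epsilon(\Phi_{\shuffle}(0))$ and group-likeness is correct, but the paper's argument is more direct: each $L_{n}$ is literally an iterated integral along $\beta_{n}$, so $L_{n}(u\shuffle v)=L_{n}(u)L_{n}(v)$ holds for every $n\in\mathbb{Z}$ by the ordinary shuffle product formula for iterated integrals; since both sides of $L(u\shuffle v;T)=L(u;T)L(v;T)$ are polynomials in $T$ agreeing at $T=2\pi i n$ for all $n$, they coincide. Your argument ultimately rests on the same fact (that is why $\Phi_{\shuffle}(0)$ is group-like), so nothing is gained by going through the factorization.

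For the harmonic identity, you correctly reduce the statement to the multiplicativity of $\Bbbk\mapsto\zeta_{*}^{S}(\Bbbk;T_{1},T_{2})$ with respect to $*$, and you recognize that $\Phi_{*}(0)$ is group-like for the quasi-shuffle coproduct. But that last fact only says that $g(\Bbbk)=\zeta_{*}(\Bbbk;T_{1})$ and $h(\Bbbk)=(-1)^{|\Bbbk|}\zeta_{*}(\overline{\Bbbk};T_{2})$ are each $*$-algebra homomorphisms; it does not by itself make $\zeta_{*}^{S}$ multiplicative. The missing step, which you flag as your ``main obstacle'' without resolving it, is precisely what the paper supplies: one observes that $\zeta_{*}^{S}=m\circ(g\otimes h)\circ\Delta$ is the convolution $g\star h$ in the Hopf algebra $(\mathbb{I},*,\Delta)$ with deconcatenation coproduct $\Delta$, and since $\Delta$ is an algebra map (Hopf axiom) and $g,h$ are characters, their convolution is again a character. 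Once you insert this Hopf-algebra convolution argument, the $2\pi i$ normalization falls out immediately from~(\ref{eq:ZetaSh_by_L}) with no need for the low-depth checks you propose.
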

\begin{proof}
The first formula follows from the shuffle product formula for iterated
integrals. Let $\mathbb{I}$ be the vector space over $\mathbb{Q}$
generated by all indices. We define the harmonic product $*:\mathbb{I}\times\mathbb{I}\to\mathbb{I}$
and the harmonic coproduct $\Delta:\mathbb{I}\to\mathbb{I}\otimes\mathbb{I}$
by
\begin{align*}
(k_{1},\dots,k_{a})*(l_{1},\dots,l_{b})= & (k_{1},(k_{2},\dots,k_{a})*(l_{1},\dots,l_{b}))+(l_{1},(k_{1},\dots,k_{a})*(l_{2},\dots,l_{b}))\\
 & +(k_{1}+l_{1},(k_{2},\dots,k_{a})*(l_{2},\dots,l_{b})),
\end{align*}
\[
\Delta(k_{1},\dots,k_{d}):=\sum_{i=0}^{d}(k_{1},\dots,k_{i})\otimes(k_{i+1},\dots,k_{d}).
\]
Then $(\mathbb{I},*,\Delta)$ is a commutative Hopf algebra. We define
$f:\mathbb{I}\to\mathbb{R}[T_{1},T_{2}]$ by $f(\Bbbk)=\zeta_{*}^{S}(\Bbbk;T_{1},T_{2})$.
Since $\frac{1}{2\pi i}\tilde{L}(w(\Bbbk);-T_{1}+T_{2})=\zeta_{*}^{S}(\Bbbk;T_{1},T_{2})$
from \ref{eq:ZetaSh_by_L} and $w(\Bbbk*\Bbbk')=w(\Bbbk)*w(\Bbbk')$,
the second formula of the proposition is equivalent to
\begin{equation}
f(\Bbbk*\Bbbk')=f(\Bbbk)f(\Bbbk').\label{eq:f_har}
\end{equation}
Then $f$ coincides with the composite map
\[
\mathbb{I}\xrightarrow{\Delta}\mathbb{I}\otimes\mathbb{I}\xrightarrow{g\otimes h}\mathbb{R}[T_{1}]\otimes\mathbb{R}[T_{2}]\xrightarrow{a\otimes b\mapsto ab}\mathbb{R}[T_{1},T_{2}]
\]
where $g,h$ are defined by
\begin{align*}
g(k_{1},\dots,k_{d}) & =\zeta_{*}(k_{1},\dots,k_{d};T_{1}),\\
h(k_{1},\dots,k_{d}) & =(-1)^{k_{d}+\cdots+k_{1}}\zeta_{*}(k_{1},\dots,k_{d};T_{2}).
\end{align*}
Thus (\ref{eq:f_har}) follows from the fact that $(\mathbb{I},*,\Delta)$
is a Hopf algebra and $g,h$ are ring homomorphisms from $(\mathbb{I},*)$
to $\mathbb{R}[T_{1}]$ or $\mathbb{R}[T_{2}]$.
\end{proof}

\begin{proof}[Proof of Theorem \ref{thm:double_shuffle}]
By putting $T=2\pi i$ in the first formula of Proposition \ref{prop:L_double},
we obtain the first formula of Theorem \ref{thm:double_shuffle}.
By putting $T=\pi i$ in the second formula of Proposition \ref{prop:L_double},
we obtain the second formula of Theorem \ref{thm:double_shuffle}.
\end{proof}

\section{\label{sec:Complementary-results}Complementary results and a conjecture}

\subsection{The space generated by RSMZVs}

For an index $\Bbbk=(k_{1},\dots,k_{d})$, we call $d$ the weight
of $\Bbbk$. For $k\in\mathbb{Z}$, we denote by $\mathcal{Z}_{k}$
(resp. $\mathcal{Z}_{k}^{RS}$) the subspace of $\mathbb{C}$ over
$\mathbb{Q}$ generated by all MZVs (resp. RSMZVs) of weight $k$
indices, i.e., we put

\begin{align*}
\mathcal{Z}_{k} & :=\left\langle \zeta_{\shuffle}(k_{1},\dots,k_{d})\mid d\in\mathbb{Z}_{\geq1},\ k_{1},\dots,k_{d}\in\mathbb{Z}_{\geq1},\ k_{1}+\cdots+k_{d}=k\right\rangle _{\mathbb{Q}},\\
\mathcal{Z}_{k}^{RS} & :=\left\langle \zeta^{RS}(k_{1},\dots,k_{d})\mid d\in\mathbb{Z}_{\geq1},\ k_{1},\dots,k_{d}\in\mathbb{Z}_{\geq1},\ k_{1}+\cdots+k_{d}=k\right\rangle _{\mathbb{Q}}.
\end{align*}

\begin{prop}
\label{prop:space_generated_by_RSMZVs}For $k\in\mathbb{Z}$, we have
\[
\mathcal{Z}_{k}^{RS}=\mathcal{Z}_{k}\oplus2\pi i\mathcal{Z}_{k-1}.
\]

\end{prop}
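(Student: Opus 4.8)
The plan is to prove the two inclusions separately; the inclusion $\mathcal{Z}_{k}^{RS}\subseteq\mathcal{Z}_{k}\oplus2\pi i\mathcal{Z}_{k-1}$ is routine and the reverse inclusion carries the content. For the first inclusion I would read off Proposition~\ref{prop:RS_explicit}: there the $(a,b)$-summand is a rational multiple of $(2\pi i)^{b-a}$ times $\zeta_{\shuffle}(k_{1},\dots,k_{a})\zeta_{\shuffle}(k_{d},\dots,k_{b+1})$, and since $k_{a+1}=\cdots=k_{b}=1$ the two factors have weights summing to $k-(b-a)$, so their product lies in $\mathcal{Z}_{k-(b-a)}$ because $\mathcal{Z}$ is a weight-graded ring. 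As $(2\pi i)^{2}=-24\zeta(2)\in\mathcal{Z}_{2}$, even powers $(2\pi i)^{2m}$ lie in $\mathcal{Z}_{2m}$ and odd powers in $2\pi i\mathcal{Z}_{2m}$; hence summands with $b-a$ even land in $\mathcal{Z}_{k}$ and those with $b-a$ odd in $2\pi i\mathcal{Z}_{k-1}$. The sum is direct since $\mathcal{Z}_{k}\subseteq\mathbb{R}$ while $2\pi i\mathcal{Z}_{k-1}\subseteq i\mathbb{R}$.

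For the reverse inclusion I would induct on $k$ (small weights being direct), assuming the statement in all weights $<k$. The first structural fact I would use is that $\mathcal{Z}_{k}^{RS}$ is stable under complex conjugation. Applying the reversal formula $Z^{RS}(\tau(w))=-\overline{Z^{RS}(w)}$ of Theorem~\ref{thm:duality_and_reversal} to $w=w(k_{1},\dots,k_{d})$, and using that $\tau$ sends this element to $(-1)^{k+1}w(k_{d},\dots,k_{1})$, gives $\overline{\zeta^{RS}(k_{1},\dots,k_{d})}=(-1)^{k}\zeta^{RS}(k_{d},\dots,k_{1})$; thus conjugation permutes the generators of $\mathcal{Z}_{k}^{RS}$, so $\mathcal{Z}_{k}^{RS}=(\mathcal{Z}_{k}^{RS}\cap\mathbb{R})\oplus(\mathcal{Z}_{k}^{RS}\cap i\mathbb{R})$ and it suffices to identify the two summands with $\mathcal{Z}_{k}$ and $2\pi i\mathcal{Z}_{k-1}$.

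Two of the needed pieces then come directly from the double shuffle relations. Since $Z^{RS}(e_{1})=\zeta^{RS}(\emptyset)=1$, the shuffle relation of Theorem~\ref{thm:double_shuffle} gives $Z^{RS}(e_{1}\shuffle w(k_{1},\dots,k_{d-1}))=2\pi i\,\zeta^{RS}(k_{1},\dots,k_{d-1})$; because $\mathfrak{h}^{0}$ is closed under $\shuffle$, the left side lies in $\mathcal{Z}_{k}^{RS}$ for every index of weight $k-1$, whence $2\pi i\,\mathcal{Z}_{k-1}^{RS}\subseteq\mathcal{Z}_{k}^{RS}$, and with the inductive hypothesis $\mathcal{Z}_{k-1}\subseteq\mathcal{Z}_{k-1}^{RS}$ this gives $2\pi i\,\mathcal{Z}_{k-1}\subseteq\mathcal{Z}_{k}^{RS}$; with the first inclusion it pins down $\mathcal{Z}_{k}^{RS}\cap i\mathbb{R}=2\pi i\,\mathcal{Z}_{k-1}$. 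Likewise the harmonic product relation makes $\bigoplus_{k}\mathcal{Z}_{k}^{RS}$ a weight-graded subring of $\mathbb{C}$, so by induction every product $\mathcal{Z}_{a}\mathcal{Z}_{k-a}$ with $1\le a\le k-1$ is contained in $\mathcal{Z}_{k}^{RS}$; that is, the space $\mathcal{D}_{k}$ of decomposables lies in $\mathcal{Z}_{k}^{RS}\cap\mathbb{R}$.

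What remains is to recover the indecomposable real classes, and this is the step I expect to be the main obstacle. By conjugation-stability $\mathcal{Z}_{k}^{RS}\cap\mathbb{R}$ is spanned by the real parts ${\rm Re}\,\zeta^{RS}(k_{1},\dots,k_{d})$, and by Corollary~\ref{cor:RSMZV_is_lift_of_SMZV} each of these equals the symmetric sum defining the lift of $\zeta^{S}$ modulo a correction lying in $\zeta(2)\mathcal{Z}_{k-2}\subseteq\mathcal{D}_{k}$. Hence, modulo $\mathcal{D}_{k}$, the only real classes available are the symmetric combinations $\zeta_{\shuffle}(k_{1},\dots,k_{d})+(-1)^{k}\zeta_{\shuffle}(k_{d},\dots,k_{1})$, and the problem reduces to showing these span $\mathcal{Z}_{k}/\mathcal{D}_{k}$. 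Equivalently, writing $\rho$ for the map of the introduction (which is readily checked to be a ring homomorphism on $\mathcal{Z}[2\pi i]$), it reduces to the surjectivity of $\rho|_{\mathcal{Z}_{k}^{RS}}$ onto $(\mathcal{Z}/\pi^{2}\mathcal{Z})_{k}$, i.e. to the statement that the symmetric multiple zeta values generate $\mathcal{Z}/\pi^{2}\mathcal{Z}$ in weight $k$. Granting this, the induction closes cleanly: since $\ker(\rho|_{\mathcal{Z}_{k}})=\zeta(2)\mathcal{Z}_{k-2}\subseteq\mathcal{D}_{k}\subseteq\mathcal{Z}_{k}^{RS}\cap\mathbb{R}$, a real subspace of $\mathcal{Z}_{k}$ that surjects under $\rho$ and contains this kernel must be all of $\mathcal{Z}_{k}$. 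The genuine difficulty is thus the spanning/surjectivity itself; I would attempt it by induction, combining the reversal relation above with the shuffle (antipode) relation for $\zeta_{\shuffle}$ — which writes $\zeta_{\shuffle}(k_{1},\dots,k_{d})+(-1)^{k}$ times a reversed-word value as a sum of products — to re-express an arbitrary $\zeta_{\shuffle}(k_{1},\dots,k_{d})$ through the symmetric classes and honestly decomposable terms, and I would fall back on the known generation of $\mathcal{Z}/\pi^{2}\mathcal{Z}$ by symmetric multiple zeta values if the elementary reduction does not terminate.
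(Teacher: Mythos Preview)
Your outline is correct and, once you invoke the ``known generation of $\mathcal{Z}/\pi^{2}\mathcal{Z}$ by symmetric multiple zeta values,'' it coincides with the paper's argument. The paper proceeds more directly: after the easy inclusion it inducts on $k$, takes $x\in\mathcal{Z}_{k}$, cites Yasuda's theorem \cite[Theorem~6.1]{Yasuda} to find $y\in\mathcal{Z}_{k}^{RS}$ with $x-y\in 2\pi i\,\mathcal{Z}_{k-1}\oplus(2\pi i)^{2}\mathcal{Z}_{k-2}$, and then uses the same shuffle identity $Z^{RS}(e_{1}\shuffle u)=2\pi i\,Z^{RS}(u)$ together with the inductive hypothesis to conclude $x-y\in 2\pi i\,\mathcal{Z}_{k-1}^{RS}\subset\mathcal{Z}_{k}^{RS}$. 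Your decomposition via complex conjugation and decomposables $\mathcal{D}_{k}$ is valid but not needed: once Yasuda gives surjectivity onto $(\mathcal{Z}/\pi^{2}\mathcal{Z})_{k}$, the whole of $\mathcal{Z}_{k}$ (not just modulo decomposables) falls out in one step.

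Two small remarks. First, the phrase ``$\mathcal{Z}$ is a weight-graded ring'' is conjectural; what you actually use, and what is unconditionally true, is $\mathcal{Z}_{a}\cdot\mathcal{Z}_{b}\subset\mathcal{Z}_{a+b}$. Second, your proposed elementary route to the surjectivity (reversal plus the shuffle antipode) is unlikely to close on its own: Yasuda's proof that SMZVs span $\mathcal{Z}/\pi^{2}\mathcal{Z}$ goes through the parity result for multiple zeta values, which is not a consequence of the naive antipode identities. So your ``fallback'' is really the main input, exactly as in the paper.
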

We need some lemma to prove this proposition.
\begin{proof}
Since $\mathcal{Z}_{k}^{RS}\subset\mathcal{Z}_{k}\oplus2\pi i\mathcal{Z}_{k-1}$
from Proposition \ref{prop:RS_explicit}, it is enough to prove that
\[
\mathcal{Z}_{k}\oplus2\pi i\mathcal{Z}_{k-1}\subset\mathcal{Z}_{k}^{RS}.
\]
We prove the proposition by induction on $k$. The case $k<0$ is
trivial. Assume that $\mathcal{Z}_{k-1}^{RS}=\mathcal{Z}_{k-1}\oplus2\pi i\mathcal{Z}_{k-2}$.
Let $x$ be an element of $\mathcal{Z}_{k}$. From the theorem of
Yasuda (\cite[Theorem 6.1]{Yasuda}), there exists $y\in\mathcal{Z}_{k}^{RS}$
such that $x-y\in2\pi i\mathcal{Z}_{k-1}\oplus(2\pi i)^{2}\mathcal{Z}_{k-2}$.
Thus 
\[
\frac{x-y}{2\pi i}\in\mathcal{Z}_{k-1}\oplus2\pi i\mathcal{Z}_{k-2}=\mathcal{Z}_{k-1}^{RS}.
\]
From the special case of the shuffle product formula
\[
Z^{RS}(e_{1}\shuffle u)=2\pi iZ^{RS}(u)\ \ \ (u\in\mathfrak{h}),
\]
we have $x-y\in2\pi i\mathcal{Z}_{k-1}^{RS}\subset\mathcal{Z}_{k}^{RS}$.
Thus $x=(x-y)+y\in\mathcal{Z}_{k}^{RS}$ and the claim is proved.
\end{proof}
It is conjectured by Zagier that
\begin{equation}
\dim_{\mathbb{Q}}\mathcal{Z}_{k}\stackrel{?}{=}d_{k}\label{eq:Zagier_Conj}
\end{equation}
where $(d_{k})_{k}$ are integers defined by
\[
d_{k}=\begin{cases}
0 & k<0\\
1 & k=0\\
d_{k-2}+d_{k-3} & k>0.
\end{cases}
\]
Therefore, from Proposition \ref{prop:space_generated_by_RSMZVs},
the conjectural dimensions of $\mathcal{Z}_{k}^{RS}$ are given by
\[
\dim_{\mathbb{Q}}\mathcal{Z}_{k}^{RS}\stackrel{?}{=}d_{k}+d_{k-1}.
\]

\subsection{Comparison of the shuffle relations of RSMZVs and Kaneko-Zagier's
shuffle relations for SMZVs}

We define $Z^{S}:\mathfrak{h}^{0}\to\mathcal{Z}/\zeta(2)\mathcal{Z}$
by $Z^{S}(w(k_{1},\dots,k_{d})):=\zeta^{S}(k_{1},\dots,k_{d})$. Put
$\tilde{Z}^{S}(u)=Z^{S}(ue_{1})$ and $\bar{u}=-e_{1}\tau(u)e_{1}^{-1}$
for $u\in\mathbb{Q}\oplus e_{1}\mathbb{Q}\left\langle e_{0},e_{1}\right\rangle $.
The following relations of SMZVs were already known.
\begin{prop}[{\cite{KZ},\cite[Corollary 6.1.5]{ZhaoBook}}]
\label{prop:p1}$Z^{S}(u)*Z^{S}(v)=Z^{S}(u)Z^{S}(v)$ for $u,v\in\mathfrak{h}^{0}$.
\end{prop}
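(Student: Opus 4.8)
The plan is to deduce the asserted harmonic product relation $Z^{S}(u*v)=Z^{S}(u)Z^{S}(v)$ by pushing the corresponding relation for refined symmetric multiple zeta values through the reduction map $\rho$. Two ingredients are already available. First, Theorem~\ref{thm:double_shuffle} gives $Z^{RS}(u*v)=Z^{RS}(u)Z^{RS}(v)$ for all $u,v\in\mathfrak{h}^{0}$. Second, Corollary~\ref{cor:RSMZV_is_lift_of_SMZV} asserts $\rho(Z^{RS}(w(\Bbbk)))=Z^{S}(w(\Bbbk))$ on every basis element $w(\Bbbk)$ of $\mathfrak{h}^{0}$; since both $\rho\circ Z^{RS}$ and $Z^{S}$ are $\mathbb{Q}$-linear and the $w(\Bbbk)$ span $\mathfrak{h}^{0}$, this upgrades to the identity $\rho\circ Z^{RS}=Z^{S}$ on all of $\mathfrak{h}^{0}$. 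Recalling $\zeta(2)\mathcal{Z}=\pi^{2}\mathcal{Z}$, so that the target $\mathcal{Z}/\zeta(2)\mathcal{Z}$ of $Z^{S}$ coincides with the target of $\rho$, it then suffices to apply $\rho$ to the RSMZV identity above.

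The one nontrivial point is that $\rho=(\mathrm{mod}\ \pi^{2})\circ{\rm Re}$ involves a real part and is therefore not a ring homomorphism on all of $\mathbb{C}$; I will verify that its restriction to $\mathcal{Z}[2\pi i]=\mathcal{Z}\oplus 2\pi i\mathcal{Z}$ \emph{is} multiplicative. Write $x=a+2\pi i\,b$ and $y=c+2\pi i\,d$ with $a,b,c,d\in\mathcal{Z}\subset\mathbb{R}$, uniquely determined by the direct sum decomposition. Then
\[
xy=(ac-4\pi^{2}bd)+2\pi i(ad+bc),
\]
so that ${\rm Re}(xy)=ac-4\pi^{2}bd$, the purely imaginary cross term being annihilated because $ad+bc\in\mathbb{R}$. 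Reducing modulo $\pi^{2}\mathcal{Z}$ then kills the summand $4\pi^{2}bd\in\pi^{2}\mathcal{Z}$, leaving $\rho(xy)=(ac\bmod\pi^{2})=\rho(x)\rho(y)$. The two features that make this work are that the sole imaginary generator is $2\pi i$, whose square $-4\pi^{2}$ lies in $\pi^{2}\mathbb{Q}$, and that $\mathcal{Z}\subset\mathbb{R}$, so that ${\rm Re}$ is simply the projection onto the $\mathcal{Z}$-summand. I expect this multiplicativity check — being careful that ${\rm Re}$ and the reduction interact as claimed — to be the only real content of the argument.

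Granting these, the proof is immediate: for $u,v\in\mathfrak{h}^{0}$,
\[
Z^{S}(u*v)=\rho\bigl(Z^{RS}(u*v)\bigr)=\rho\bigl(Z^{RS}(u)Z^{RS}(v)\bigr)=\rho\bigl(Z^{RS}(u)\bigr)\,\rho\bigl(Z^{RS}(v)\bigr)=Z^{S}(u)Z^{S}(v),
\]
where the outer equalities use $\rho\circ Z^{RS}=Z^{S}$, the second equality uses Theorem~\ref{thm:double_shuffle}, and the third uses the multiplicativity of $\rho$ on $\mathcal{Z}[2\pi i]$ established above. This recovers the harmonic product relation for $Z^{S}$, as desired.
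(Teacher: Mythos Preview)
Your proof is correct and follows exactly the approach the paper indicates: the paper simply states that Proposition~\ref{prop:p1} is an ``immediate consequence'' of \ref{enu:e1} (the harmonic relation for $Z^{RS}$), and you have spelled out this deduction, including the verification that $\rho$ is multiplicative on $\mathcal{Z}[2\pi i]$.
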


\begin{prop}[{\cite{Jaro},\cite{KZ},\cite[Theorem 6.3.4]{ZhaoBook}}]
\label{prop:p2}$\tilde{Z}^{S}(u\shuffle v)=\tilde{Z}^{S}(u\bar{v})$
or equivalently $Z^{S}((u\shuffle v)e_{1})=Z^{S}(ue_{1}\epsilon(v))$
for $u,v\in\mathbb{Q}\oplus e_{1}\mathbb{Q}\left\langle e_{0},e_{1}\right\rangle $.
\end{prop}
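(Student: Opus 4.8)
The plan is to realize $Z^{S}$ through the generating series of Section~\ref{sec:Other_variants}, to reduce Proposition~\ref{prop:p2} to a combinatorial identity of deconcatenation sums attached to the shuffle associator, and to carry out the reduction modulo $\zeta(2)$ only at the very end. The starting point is the factorization (\ref{eq:PhiL_by_PhiShuffle}), namely $\Phi^{L}(T)=\Phi_{\shuffle}(0)\exp(TX_{1})\epsilon(\Phi_{\shuffle}(0))$. Since the loop $\beta_{0}={\rm dch}\cdot{\rm dch}^{-1}$ is contractible we have $\Phi^{L}(0)=1$, so that $\epsilon(\Phi_{\shuffle}(0))=\Phi_{\shuffle}(0)^{-1}$. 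Writing $Z(A):={\rm coeff}(\Phi_{\shuffle}(0),X_{A})=I_{{\rm dch}}(0';A;1')$ and $\bar{Z}:=Z\circ\epsilon$, the shuffle product formula for iterated integrals makes $Z$ a character of $(\mathbb{Q}\langle e_{0},e_{1}\rangle,\shuffle)$, and the identity just obtained exhibits $\bar{Z}$ as its convolution inverse, i.e. $\sum_{W=AB}Z(A)\bar{Z}(B)=0$ for every nonempty word $W$. Extracting the coefficient of $X_{W}$ in (\ref{eq:PhiL_by_PhiShuffle}) yields the closed form
\[
L(W;T)=\sum_{W=A\,e_{1}^{\,j}\,B}\frac{T^{j}}{j!}\,Z(A)\,\bar{Z}(B),
\]
the sum running over factorizations of $W$ with a central block of $j\ge0$ consecutive $e_{1}$'s. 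Combining this with $Z^{RS}(w)=\frac{1}{2\pi i}L(w;2\pi i)$ and Corollary~\ref{cor:RSMZV_is_lift_of_SMZV}, the $j=0$ term vanishes, the terms with $j\ge2$ lie in $2\pi i\mathcal{Z}$, and only the $j=1$ term survives $\rho$; hence $Z^{S}(W)\equiv\sum_{W=Ae_{1}B}Z(A)\bar{Z}(B)\pmod{\zeta(2)}$ for $W\in\mathfrak{h}^{0}$.

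With this dictionary Proposition~\ref{prop:p2} becomes the congruence
\[
\sum_{(u\shuffle v)e_{1}=Ae_{1}B}Z(A)\bar{Z}(B)\equiv\sum_{ue_{1}\epsilon(v)=Ae_{1}B}Z(A)\bar{Z}(B)\pmod{\zeta(2)}.
\]
On each side I would first peel off the cut at the distinguished $e_{1}$, i.e. the final appended letter on the left and the explicit middle letter on the right. Using that $Z$ is a $\shuffle$-character and that $\bar{Z}\circ\epsilon=Z$, both of these boundary cuts produce the same main term $Z(u)Z(v)$. I would then expand the shuffle on the left and the concatenation on the right and match the internal-cut contributions term by term, repeatedly invoking $Z(a\shuffle b)=Z(a)Z(b)$ and the convolution-inverse relation; this matching is the combinatorial heart of the argument.

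The main obstacle, and the reason the reduction modulo $\zeta(2)$ cannot be avoided, is that the displayed identity is false on the nose: already for $u=v=e_{1}$ the two sides evaluate to $-2\zeta(2)$ and $\zeta(2)$, the discrepancy being produced by $Z(e_{1})=0$ together with $Z(e_{1}e_{1})=-\frac{1}{2}\zeta(2)$. The genuine content is therefore to prove that the difference of the two sides always lies in $\zeta(2)\mathcal{Z}$, even though neither side is individually $\zeta(2)$-divisible once honest multiple zeta values appear in the internal cuts. I expect this divisibility to be the crux: it amounts to controlling the failure of $Z$ to be reversal-symmetric on the words carrying leading or trailing $e_{1}$'s, and should follow from the regularization-independence of $\zeta^{S}$ modulo $\zeta(2)$ (equivalently, from the reality properties of $\Phi_{\shuffle}(0)$ modulo $\zeta(2)$), which is exactly what converts the shuffle antipode identity into a valid congruence. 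As a consistency check, applying the reversal relation $Z^{RS}(\tau(w))=-\overline{Z^{RS}(w)}$ of Theorem~\ref{thm:duality_and_reversal} to $ue_{1}\epsilon(v)$ recovers the $u\leftrightarrow v$ symmetry of the right-hand side, matching the manifest symmetry of $(u\shuffle v)e_{1}$ on the left.
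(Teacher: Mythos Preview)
Your setup is sound: the formula $L(W;T)=\sum_{W=A e_{1}^{j}B}\frac{T^{j}}{j!}Z(A)\bar Z(B)$ and the conclusion $Z^{S}(W)\equiv\sum_{W=Ae_{1}B}Z(A)\bar Z(B)\pmod{\zeta(2)}$ are both correct. The gap is in what follows. You reduce Proposition~\ref{prop:p2} to the displayed congruence, acknowledge via the example $u=v=e_{1}$ that it is false on the nose, and then defer the $\zeta(2)$-divisibility of the discrepancy to ``regularization-independence of $\zeta^{S}$ modulo $\zeta(2)$'' and unspecified ``reality properties of $\Phi_{\shuffle}(0)$''. None of this is carried out, and the proposed term-by-term matching of internal cuts using $Z(a\shuffle b)=Z(a)Z(b)$ and the antipode relation cannot, as your own example shows, close the argument without an additional input that you have not supplied.

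The paper bypasses this entirely by working at the level of words rather than values. The key step is the purely combinatorial Lemma~\ref{lem:in_hh}: for $v=e_{b_{1}}\cdots e_{b_{n}}$ one has the identity
\[
(u\shuffle v)e_{a}-ue_{a}\epsilon(v)=-\sum_{i=1}^{n}\bigl((u\shuffle e_{b_{i+1}}\cdots e_{b_{n}})e_{a}\bigr)\shuffle\epsilon(e_{b_{1}}\cdots e_{b_{i}}),
\]
so that $(u\shuffle v)e_{1}-ue_{1}\epsilon(v)\in\mathfrak{h}\shuffle\mathfrak{h}$. Now the shuffle relation~\ref{enu:e2}, $Z^{RS}(p\shuffle q)=2\pi i\,Z^{RS}(p)Z^{RS}(q)$, together with $Z^{RS}(\mathfrak{h})\subset\mathcal{Z}[2\pi i]$ (Lemma~\ref{lem:RS_in_Z}), forces $Z^{RS}$ of the difference into $2\pi i\,\mathcal{Z}[2\pi i]$, and Corollary~\ref{cor:RSMZV_is_lift_of_SMZV} gives $Z^{S}$ of the difference equal to zero. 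The $\zeta(2)$-divisibility you were trying to establish by hand is thus automatic from~\ref{enu:e2}; the missing idea in your approach is precisely the word-level decomposition of Lemma~\ref{lem:in_hh}.
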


\begin{prop}[{\cite[Corollaire 1.12]{Jaro}}]
\label{prop:p3}$Z^{S}(\varphi(u))=-Z^{S}(u)$ for $u\in\mathfrak{h}^{0}$. 
\end{prop}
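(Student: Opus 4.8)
The plan is to deduce this statement directly from its refinement, the first formula of Theorem~\ref{thm:duality_and_reversal}, by pushing everything forward through the projection $\rho$. The guiding observation is that $Z^{S}$ and $Z^{RS}$ are compatible via $\rho$: on the basis $\{w(\Bbbk)\}$ of $\mathfrak{h}^{0}$ we have $\rho(Z^{RS}(w(\Bbbk)))=\rho(\zeta^{RS}(\Bbbk))=\zeta^{S}(\Bbbk)=Z^{S}(w(\Bbbk))$ by Corollary~\ref{cor:RSMZV_is_lift_of_SMZV}, so that $Z^{S}=\rho\circ Z^{RS}$ as $\mathbb{Q}$-linear maps on $\mathfrak{h}^{0}$. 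Here I use $\zeta(2)\mathcal{Z}=\pi^{2}\mathcal{Z}$, which holds since $\zeta(2)=\pi^{2}/6$, to identify the two target rings $\mathcal{Z}/\zeta(2)\mathcal{Z}$ and $\mathcal{Z}/\pi^{2}\mathcal{Z}$.

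First I would check that $\varphi$ preserves $\mathfrak{h}^{0}$, so that both sides of the claimed identity lie in the domain of $Z^{S}$. Indeed $\varphi(e_{1})=-e_{1}\in e_{1}\mathbb{Q}$, and for a word $e_{1}we_{1}$ we have $\varphi(e_{1}we_{1})=(-e_{1})\varphi(w)(-e_{1})=e_{1}\varphi(w)e_{1}\in e_{1}\mathbb{Q}\langle e_{0},e_{1}\rangle e_{1}$, hence $\varphi(\mathfrak{h}^{0})\subset\mathfrak{h}^{0}$. Then I would apply $\rho$ to the identity $Z^{RS}(\varphi(u))=-\overline{Z^{RS}(u)}$ from Theorem~\ref{thm:duality_and_reversal}, valid for $u\in\mathfrak{h}^{0}$.

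The remaining point is that $\rho$ turns the operation $z\mapsto-\overline{z}$ into plain negation. Writing $Z^{RS}(u)=a+2\pi i\,b$ with $a,b\in\mathcal{Z}\subset\mathbb{R}$, the real part of $-\overline{Z^{RS}(u)}=-a+2\pi i\,b$ is $-a$, whereas the real part of $Z^{RS}(u)$ is $a$; hence $\rho(-\overline{Z^{RS}(u)})=-\rho(Z^{RS}(u))$. Combining this with the compatibility $Z^{S}=\rho\circ Z^{RS}$ and the fact that $\varphi(u)\in\mathfrak{h}^{0}$ yields
\[
Z^{S}(\varphi(u))=\rho(Z^{RS}(\varphi(u)))=\rho(-\overline{Z^{RS}(u)})=-\rho(Z^{RS}(u))=-Z^{S}(u),
\]
which is the claim.

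I do not expect a genuine obstacle here: the assertion is strictly weaker than Theorem~\ref{thm:duality_and_reversal}, which carries exactly the extra complex-conjugation information that $\rho$ discards. The only points demanding care are bookkeeping ones—namely $\varphi(\mathfrak{h}^{0})\subset\mathfrak{h}^{0}$, the target-ring identification $\mathcal{Z}/\zeta(2)\mathcal{Z}=\mathcal{Z}/\pi^{2}\mathcal{Z}$, and the behavior of $\rho$ under $z\mapsto-\overline{z}$—all of which are routine.
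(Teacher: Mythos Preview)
Your proposal is correct and matches the paper's approach: the paper simply states that Proposition~\ref{prop:p3} is an ``immediate consequence'' of \ref{enu:e3} (the first formula of Theorem~\ref{thm:duality_and_reversal}), and you have spelled out precisely this deduction via the projection $\rho$, together with the necessary bookkeeping checks that the paper leaves implicit.
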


\begin{prop}[{\cite{KZ},\cite[Theorem 6.3.4]{ZhaoBook}}]
\label{prop:p4}$Z^{S}(\epsilon(u))=-Z^{S}(u)$ for $u\in\mathfrak{h}^{0}$.
\end{prop}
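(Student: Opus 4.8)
The plan is to recognize that this proposition is, after translating through the definitions, exactly the \emph{reversal formula} for symmetric multiple zeta values, and that once it is put in that form its proof is a purely formal reindexing of the defining sum. So the first task is the translation, and the second is the reindexing; everything else is sign bookkeeping.

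First I would unwind the left-hand side. On $\mathbb{Q}\langle e_0,e_1\rangle$ the anti-automorphism $\epsilon$ coincides with $\tau$, both being the anti-automorphism determined by $e_a\mapsto -e_a$. The word $w(k_1,\dots,k_d)=(-1)^d e_1 e_0^{k_1-1}e_1\cdots e_1 e_0^{k_d-1}e_1$ has length $k_1+\cdots+k_d+1$, so reversing it and collecting the sign $(-1)^{\text{length}}$ gives $\epsilon(w(k_1,\dots,k_d))=-(-1)^{k_1+\cdots+k_d}\,w(k_d,\dots,k_1)$. In particular $\epsilon$ preserves $\mathfrak{h}^0$, so $Z^S(\epsilon(u))$ is defined, and by linearity it suffices to treat $u=w(k_1,\dots,k_d)$. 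Applying $Z^S$, the assertion $Z^S(\epsilon(u))=-Z^S(u)$ becomes the reversal identity $(-1)^{k_1+\cdots+k_d}\zeta^S(k_d,\dots,k_1)=\zeta^S(k_1,\dots,k_d)$ in $\mathcal{Z}/\pi^2\mathcal{Z}$.

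Next I would prove this reversal identity directly from the definition of $\zeta^S$. Expanding $\zeta^S(k_d,\dots,k_1)$ and rewriting the summand yields $\sum_{i=0}^d(-1)^{k_1+\cdots+k_{d-i}}\zeta_{\shuffle}(k_d,\dots,k_{d-i+1})\zeta_{\shuffle}(k_1,\dots,k_{d-i})$; substituting $j=d-i$ turns this into $\sum_{j=0}^d(-1)^{k_1+\cdots+k_j}\zeta_{\shuffle}(k_1,\dots,k_j)\zeta_{\shuffle}(k_d,\dots,k_{j+1})$. Using the identity $(-1)^{k_1+\cdots+k_j}=(-1)^{k_1+\cdots+k_d}(-1)^{k_{j+1}+\cdots+k_d}$ and comparing with the defining sum for $\zeta^S(k_1,\dots,k_d)$ gives $\zeta^S(k_d,\dots,k_1)=(-1)^{k_1+\cdots+k_d}\zeta^S(k_1,\dots,k_d)$, as needed. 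I would stress that this manipulation takes place among the real numbers $\zeta_{\shuffle}(\cdot)$ before reducing mod $\pi^2$, so no regularization subtlety intervenes.

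As an internal consistency check the same conclusion follows from the machinery already set up in this paper. Since $\epsilon=\tau$, the second formula of Theorem~\ref{thm:duality_and_reversal} gives $Z^{RS}(\epsilon(u))=-\overline{Z^{RS}(u)}$, and applying $\rho$ (which first takes real parts and then reduces mod $\pi^2$) together with Corollary~\ref{cor:RSMZV_is_lift_of_SMZV}, which identifies $\rho\circ Z^{RS}$ with $Z^S$, yields $Z^S(\epsilon(u))=-Z^S(u)$; here the only extra input is that ${\rm Re}(-\overline{z})=-{\rm Re}(z)$ on $\mathcal{Z}\oplus 2\pi i\mathcal{Z}$. I do not expect a genuine obstacle: the entire content is careful tracking of signs, first in passing from $\epsilon(w(\Bbbk))$ to the reversed word and then in reindexing the symmetric sum, and the one point that must be checked explicitly, namely that $\epsilon$ maps $\mathfrak{h}^0$ into itself, is immediate from the formula for $\epsilon(w(\Bbbk))$ obtained above.
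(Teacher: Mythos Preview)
Your proposal is correct, and in fact you give two complete arguments. The one you label an ``internal consistency check'' --- descending from $Z^{RS}(\epsilon(u))=-\overline{Z^{RS}(u)}$ via $\rho$ and Corollary~\ref{cor:RSMZV_is_lift_of_SMZV} --- is precisely the paper's own route: the paper does not write out a proof of Proposition~\ref{prop:p4} but simply remarks that it is an immediate consequence of \ref{enu:e4}, and your paragraph supplies exactly the missing line (including the observation that $\epsilon=\tau$ and that $\epsilon$ preserves $\mathfrak{h}^0$).

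Your primary argument, by contrast, is a genuinely different and more elementary route: you bypass the RSMZV machinery entirely and verify the reversal formula directly at the level of the real-valued sum $\sum_i(-1)^{k_{i+1}+\cdots+k_d}\zeta_{\shuffle}(k_1,\dots,k_i)\zeta_{\shuffle}(k_d,\dots,k_{i+1})$, before any reduction modulo $\pi^2$. This has the advantage of being self-contained and of showing that the identity already holds for the shuffle-lift $\zeta_\shuffle^S$, not only for $\zeta^S$; the paper's deduction via \ref{enu:e4}, on the other hand, illustrates the theme of the section that the RSMZV relations specialize to the known SMZV relations.
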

On the other hand, we proved the following relations of RSMZVs in
this paper.
\begin{enumerate}
\item \label{enu:e1}$Z^{RS}(u)*Z^{RS}(v)=Z^{RS}(u)Z^{RS}(v)$ for $u,v\in\mathfrak{h}^{0}$
(Theorem \ref{thm:double_shuffle}). 
\item \label{enu:e2}$Z^{RS}(u\shuffle v)=2\pi iZ^{RS}(u)Z^{RS}(v)$ for
$u,v\in\mathfrak{h}$ (Theorem \ref{thm:double_shuffle}).
\item \label{enu:e3}$Z^{RS}(\varphi(u))=-\overline{Z^{RS}(u)}$ for $u\in\mathfrak{h}^{0}$
(Theorem \ref{thm:duality_and_reversal}). 
\item \label{enu:e4}$Z^{RS}(\epsilon(u))=-\overline{Z^{RS}(u)}$ for $u\in\mathfrak{h}$
(Theorem \ref{thm:duality_and_reversal}). 
\end{enumerate}
Now Propositions \ref{prop:p1}, \ref{prop:p3} and \ref{prop:p4}
are immediate consequences of \ref{enu:e1}, \ref{enu:e3} and \ref{enu:e4}
respectively. However the relation of Proposition \ref{prop:p2} and
\ref{enu:e2} is not obvious. The purpose of this section is to deduce
Proposition \ref{prop:p2} from \ref{enu:e2}. First, we show that
Corollary \ref{cor:RSMZV_is_in_Z} can be extended as follows. 
\begin{lem}
\label{lem:RS_in_Z}For $u\in\mathfrak{h}$, we have
\[
Z^{RS}(u)\in\mathcal{Z}[2\pi i].
\]
\end{lem}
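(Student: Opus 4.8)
The plan is to derive the lemma purely from the identity $Z^{RS}(w)=\frac{1}{2\pi i}L(w;2\pi i)$, which holds for every $w\in\mathfrak{h}$, together with the closed form $\Phi^{L}(T)=\Phi_{\shuffle}(0)\exp(TX_{1})\epsilon(\Phi_{\shuffle}(0))$ of \eqref{eq:PhiL_by_PhiShuffle}. Concretely, I want to prove two structural facts about the polynomial $L(w;T)\in\mathbb{C}[T]$ for each word $w$ of positive length: that its coefficients lie in $\mathcal{Z}$, and that it is divisible by $T$. Granting these, one writes $L(w;T)=T\,g_{w}(T)$ with $g_{w}\in\mathcal{Z}[T]$, and then $Z^{RS}(w)=\frac{1}{2\pi i}\cdot 2\pi i\,g_{w}(2\pi i)=g_{w}(2\pi i)\in\mathcal{Z}[2\pi i]$. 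Since $\mathfrak{h}$ is spanned by nonempty words and both $Z^{RS}$ and $L(\,\cdot\,;T)$ are linear, the lemma then follows at once, simultaneously reproving and extending Corollary~\ref{cor:RSMZV_is_in_Z}.

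First I would check that $L(w;T)\in\mathcal{Z}[T]$ for every word $w$. Reading off the coefficient of $X_{w}$ in \eqref{eq:PhiL_by_PhiShuffle}, it is a finite sum of products of a coefficient of $\Phi_{\shuffle}(0)$, a coefficient $T^{n}/n!$ of $\exp(TX_{1})$, and a coefficient of $\epsilon(\Phi_{\shuffle}(0))$. The coefficients of $\Phi_{\shuffle}(0)$ are the values $\pm\zeta_{\shuffle}(\Bbbk;0)=\pm\zeta_{\shuffle}(\Bbbk)\in\mathcal{Z}$; the anti-automorphism $\epsilon$ merely reverses words and inserts signs, so $\epsilon(\Phi_{\shuffle}(0))$ also has coefficients in $\mathcal{Z}$; and the middle factor contributes only rational multiples of powers of $T$. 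Hence each coefficient of the product is a $\mathcal{Z}$-linear combination of powers of $T$, giving $L(w;T)\in\mathcal{Z}[T]$.

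The key step is the divisibility, i.e. $L(w;0)=0$ for every nonempty word $w$. Since $L(w;0)=L_{0}(w)=I_{\beta_{0}}(0';a_{1},\dots,a_{k};0')$ and $\beta_{0}={\rm dch}\cdot{\rm dch}^{-1}$ is null-homotopic, this iterated integral vanishes whenever $k\geq1$; equivalently, setting $T=0$ in \eqref{eq:PhiL_by_PhiShuffle} yields $\Phi^{L}(0)=\Phi_{\shuffle}(0)\epsilon(\Phi_{\shuffle}(0))$, which is $1$ as already recorded by $\Phi(0)=1$ in \eqref{eq:PhiRS_by_PhiL}, so all positive-length coefficients of $\Phi^{L}(0)$ vanish. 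Combined with $L(w;T)\in\mathcal{Z}[T]$, the vanishing constant term forces $T\mid L(w;T)$ in $\mathcal{Z}[T]$, which is exactly what is needed above.

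The main (and essentially only) obstacle is that the prefactor $\frac{1}{2\pi i}$ in $Z^{RS}(w)=\frac{1}{2\pi i}L(w;2\pi i)$ could in principle push the value out of $\mathcal{Z}[2\pi i]$; the divisibility $T\mid L(w;T)$ is precisely the mechanism that cancels it, so the crux is to pin down the two structural facts rather than to perform any computation. I would close by recording that evaluating a polynomial of $\mathcal{Z}[T]$ at $T=2\pi i$ indeed lands in $\mathcal{Z}[2\pi i]=\mathcal{Z}\oplus 2\pi i\,\mathcal{Z}$, since $(2\pi i)^{2}=-4\pi^{2}\in\mathcal{Z}$ (as $\pi^{2}=6\zeta(2)\in\mathcal{Z}$), so higher powers of $2\pi i$ collapse back into $\mathcal{Z}$ or $2\pi i\,\mathcal{Z}$.
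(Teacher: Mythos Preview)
Your argument is correct and is essentially the same as the paper's: both deduce the lemma from \eqref{eq:PhiRS_by_PhiL} and \eqref{eq:PhiL_by_PhiShuffle}, which combine to give $\Phi^{RS}=\Phi_{\shuffle}(0)\,\dfrac{\exp(2\pi iX_{1})-1}{2\pi i}\,\epsilon(\Phi_{\shuffle}(0))$. You simply unpack this identity into the two structural facts $L(w;T)\in\mathcal{Z}[T]$ and $T\mid L(w;T)$, whereas the paper records the identity in one line; one small imprecision is that the coefficients of $\Phi_{\shuffle}(0)$ you need are all regularized iterated integrals $I_{{\rm dch}}(0';a_{1},\dots,a_{k};1')$, not only those of the shape $\pm\zeta_{\shuffle}(\Bbbk)$, but these are well known to lie in $\mathcal{Z}$, so the conclusion stands.
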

\begin{proof}
The claim follows from the identity
\[
\Phi^{RS}=\Phi_{\shuffle}(0)\frac{\exp(2\pi iX_{1})-1}{2\pi i}\epsilon(\Phi_{\shuffle}(0)),
\]
which is a consequence of (\ref{eq:PhiRS_by_PhiL}) and (\ref{eq:PhiL_by_PhiShuffle}).\end{proof}
\begin{lem}
\label{lem:in_hh}We denote by $\mathfrak{h}\shuffle\mathfrak{h}$
the subspace of $\mathfrak{h}$ spanned by $\{u\shuffle v\mid u,v\in\mathfrak{h}\}$.
Then for all $u,v\in\mathbb{Q}\left\langle e_{0},e_{1}\right\rangle $
and $a\in\{0,1\}$, we have
\[
(u\shuffle v)e_{a}-ue_{a}\epsilon(v)\in\mathfrak{h}\shuffle\mathfrak{h}.
\]
\end{lem}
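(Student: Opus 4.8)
The plan is to reduce to the case where $u$ and $v$ are words (monomials) by $\mathbb{Q}$-linearity, and then to argue by induction on the length $m=\deg v$, proving the statement simultaneously for every word $u$ and every letter $a\in\{0,1\}$. The base case $v=1$ is immediate, since $(u\shuffle 1)e_a-ue_a\epsilon(1)=ue_a-ue_a=0\in\mathfrak{h}\shuffle\mathfrak{h}$.

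For the inductive step I would write $v=v''e_d$ with $e_d$ the last letter and $\deg v''=m-1$, and exploit the right-hand form of the shuffle recursion,
\[
ue_a\shuffle v''e_d=(u\shuffle v''e_d)e_a+(ue_a\shuffle v'')e_d,
\]
which is the symmetric counterpart of the defining recursion (the last letter of any shuffle comes either from $ue_a$ or from $v$). Rearranging yields the key identity
\[
(u\shuffle v)e_a=ue_a\shuffle v-(ue_a\shuffle v'')e_d.
\]
The first term is harmless: since $ue_a$ and $v=v''e_d$ are both words of positive degree, we have $ue_a,v\in\mathfrak{h}$ and hence $ue_a\shuffle v\in\mathfrak{h}\shuffle\mathfrak{h}$ for free. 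For the second term I would apply the induction hypothesis to the word $v''$ of length $m-1$, but with $u$ replaced by $ue_a$ and $a$ replaced by $d$, which gives $(ue_a\shuffle v'')e_d\equiv ue_ae_d\,\epsilon(v'')\pmod{\mathfrak{h}\shuffle\mathfrak{h}}$. Finally I would match this against the target using $\epsilon(v)=\epsilon(v''e_d)=\epsilon(e_d)\epsilon(v'')=-e_d\epsilon(v'')$, so that $ue_a\epsilon(v)=-ue_ae_d\,\epsilon(v'')$; combining the two displays then produces $(u\shuffle v)e_a\equiv ue_a\epsilon(v)\pmod{\mathfrak{h}\shuffle\mathfrak{h}}$, as desired.

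I expect the one genuine pitfall to be the tempting but doomed strategy of peeling a letter off the \emph{left} and invoking $(u'\shuffle v)e_a\equiv u'e_a\epsilon(v)$ on the smaller piece: this fails because $\mathfrak{h}\shuffle\mathfrak{h}$ is \emph{not} stable under multiplication by a single letter, so an error term in $\mathfrak{h}\shuffle\mathfrak{h}$ cannot be carried through a concatenation. The decisive idea is to peel $v$ from the right, which absorbs the trailing letter $e_a$ into the left factor $ue_a$ \emph{before} taking the shuffle; this makes the leading term $ue_a\shuffle v$ land in $\mathfrak{h}\shuffle\mathfrak{h}$ automatically and leaves a remainder that is again an instance of the lemma with strictly shorter second argument. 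I would also check the boundary case $m=1$ (where $v''=1$) separately, although it collapses to the trivial identity $(u\shuffle e_d)e_a+ue_ae_d=ue_a\shuffle e_d\in\mathfrak{h}\shuffle\mathfrak{h}$.
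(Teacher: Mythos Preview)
Your induction is correct. The right-hand recursion $ue_a\shuffle v''e_d=(u\shuffle v''e_d)e_a+(ue_a\shuffle v'')e_d$ is the legitimate mirror of the defining left recursion (reversal is a $\shuffle$-homomorphism), the term $ue_a\shuffle v$ lies in $\mathfrak{h}\shuffle\mathfrak{h}$ because both factors have positive length, and the sign bookkeeping with $\epsilon(v''e_d)=-e_d\,\epsilon(v'')$ is right. Your remark about why peeling from the left would fail is also on point.

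The paper does not run an induction but instead writes down a closed identity in one line: for $v=e_{b_1}\cdots e_{b_n}$,
\[
(u\shuffle v)e_a-ue_a\epsilon(v)=-\sum_{i=1}^{n}(u\shuffle e_{b_{i+1}}\cdots e_{b_n})e_a\ \shuffle\ \epsilon(e_{b_1}\cdots e_{b_i}),
\]
each summand visibly lying in $\mathfrak{h}\shuffle\mathfrak{h}$. If you unroll your recursion $F(u,v''e_d,a)=ue_a\shuffle v''e_d-F(ue_a,v'',d)$ you obtain a \emph{different} explicit decomposition,
\[
(u\shuffle v)e_a-ue_a\epsilon(v)=\sum_{i=1}^{n} ue_a\,\epsilon(e_{b_{i+1}}\cdots e_{b_n})\ \shuffle\ e_{b_1}\cdots e_{b_i},
\]
so the two proofs produce distinct witnesses for membership in $\mathfrak{h}\shuffle\mathfrak{h}$. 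The paper's version is slicker to state; yours makes the mechanism (and the reason one must peel $v$ from the right) more transparent.
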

\begin{proof}
It is enough to consider only the case where $v$ is a monomial $e_{b_{1}}\cdots e_{b_{n}}$.
Then we have
\[
(u\shuffle v)e_{a}-ue_{a}\epsilon(v)=-\sum_{i=1}^{n}(u\shuffle e_{b_{i+1}}\cdots e_{b_{n}})e_{a}\shuffle\epsilon(e_{b_{1}}\cdots e_{b_{i}})\in\mathfrak{h}\shuffle\mathfrak{h}.
\]

\end{proof}

\begin{proof}[Proof of Proposition \ref{prop:p2} using \ref{enu:e2}]
Let $u,v\in\mathbb{Q}\oplus e_{1}\mathbb{Q}\left\langle e_{0},e_{1}\right\rangle $.
From Lemma \ref{lem:in_hh}, we have
\[
(u\shuffle v)e_{1}-ue_{1}\epsilon(v)\in\mathfrak{h}\shuffle\mathfrak{h}.
\]
Hence from \ref{enu:e2} and Lemma \ref{lem:RS_in_Z}, we have
\[
Z^{RS}((u\shuffle v)e_{1}-ue_{1}\epsilon(v))\in2\pi i\mathcal{Z}[2\pi i].
\]
Therefore, from Corollary \ref{cor:RSMZV_is_lift_of_SMZV}, we have
\[
Z^{S}((u\shuffle v)e_{1}-ue_{1}\epsilon(v))=0.
\]
Thus Proposition \ref{prop:p2} is proved.
\end{proof}

\subsection{The strength of the double shuffle relations of RSMZVs}

We define a grading $\mathfrak{h}=\oplus_{k=-1}^{\infty}\mathfrak{h}_{k}$
by $\mathfrak{h}_{k}=\oplus_{a_{1},\dots,a_{k+1}\in\{0,1\}}\mathbb{Q}e_{a_{1}}\cdots e_{a_{k+1}}$.
We put $\mathfrak{h}_{k}^{0}:=\mathfrak{h}_{k}\cap\mathfrak{h}^{0}$.
From Theorem \ref{thm:double_shuffle}, we have
\[
D(u,v,w):=u*(v\shuffle w)-v\shuffle(u*w)\in\ker Z^{RS}
\]
for $u,v,w\in\mathfrak{h}^{0}$. For example
\[
D(e_{1}^{2},e_{1},e_{1})=2w(2)+w(1,1)\in\ker Z^{RS}.
\]

\begin{conjecture}
For $k\in\mathbb{Z}$, the space $\ker Z^{RS}\cap\mathfrak{h}_{k}$
is spanned by
\[
\left\langle D(u,v,w)\mid a+b+c=k-1,\ (u,v,w)\in\mathfrak{h}_{a}\times\mathfrak{h}_{b}\times\mathfrak{h}_{c}\right\rangle .
\]

\end{conjecture}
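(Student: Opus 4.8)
The inclusion $\langle D(u,v,w)\rangle\subseteq\ker Z^{RS}$ is already available and costs nothing beyond Theorem~\ref{thm:double_shuffle}: for $u,v,w\in\mathfrak{h}^{0}$ the harmonic and shuffle identities combine to give
\[
Z^{RS}\bigl(u*(v\shuffle w)\bigr)=2\pi i\,Z^{RS}(u)Z^{RS}(v)Z^{RS}(w)=Z^{RS}\bigl(v\shuffle(u*w)\bigr),
\]
so each $D(u,v,w)$ lies in the kernel. The whole content of the statement is therefore the reverse inclusion, namely that these \emph{double shuffle} elements already exhaust $\ker Z^{RS}$ in each degree. This is precisely the RSMZV analogue of the classical conjecture that the regularized double shuffle relations generate all $\mathbb{Q}$-linear relations among multiple zeta values, and for that reason I would not attack it head-on but rather try to reduce it to the MZV statement.

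The plan is to use the explicit dictionary of Section~\ref{sec:Other_variants} together with Proposition~\ref{prop:space_generated_by_RSMZVs}. By Lemma~\ref{lem:RS_in_Z} and the weight grading, $Z^{RS}$ carries $\mathfrak{h}_{k}$ onto $\mathcal{Z}_{k}\oplus2\pi i\mathcal{Z}_{k-1}$ (higher powers of $2\pi i$ collapse because $(2\pi i)^{2}\in\mathcal{Z}$), so the codimension of $\ker Z^{RS}\cap\mathfrak{h}_{k}$ is forced to be $\dim_{\mathbb{Q}}\mathcal{Z}_{k}+\dim_{\mathbb{Q}}\mathcal{Z}_{k-1}$. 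I would then split a hypothetical kernel element through \eqref{eq:PhiRS_by_PhiL}--\eqref{eq:PhiL_by_PhiShuffle} into its real ($\mathcal{Z}_{k}$) and imaginary ($2\pi i\mathcal{Z}_{k-1}$) parts, read off the resulting $\mathbb{Q}$-linear relations among genuine MZVs, and conversely try to recover every classical double shuffle relation from the $D(u,v,w)$ by using the $2\pi i$-twisted shuffle identity $Z^{RS}(e_{1}\shuffle u)=2\pi i\,Z^{RS}(u)$ (the very device already used in Proposition~\ref{prop:space_generated_by_RSMZVs}) to pass between admissible and non-admissible words. The hoped-for conclusion is an equivalence: the conjecture in degree $k$ holds if and only if the regularized double shuffle relations generate all relations among MZVs of weight $\le k$.

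In parallel I would pin down and test the dimension prediction in low weight, where the span of the $D(u,v,w)$ should have codimension $d_{k}+d_{k-1}$ in $\mathfrak{h}_{k}$ (with $D(e_{1}^{2},e_{1},e_{1})=2w(2)+w(1,1)$ as the first nonzero instance), both as a sanity check on the formulation and to flag any missing generators. The decisive obstacle, however, is intrinsic: the reduction rests on two facts that are themselves open, namely Zagier's dimension conjecture \eqref{eq:Zagier_Conj} (where only upper bounds are known) and the assertion that double shuffle captures \emph{every} MZV relation (the hard half of the MZV double shuffle conjecture). I therefore do not expect an unconditional proof by this route; the realistic targets are the conditional equivalence above, an explicit description of the double shuffle ideal generated by the $D(u,v,w)$ (a set of standard generators together with a monomial basis for the quotient $\mathfrak{h}_{k}/\langle D\rangle$), and machine verification in small degrees. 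Any genuinely unconditional progress would have to import new arithmetic information about MZVs beyond the relations established in this paper.
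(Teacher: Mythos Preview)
The statement you are attempting to prove is labeled a \emph{Conjecture} in the paper, and the paper offers no proof of it. What the paper does is exactly in the spirit of your discussion: it notes the easy inclusion $D(u,v,w)\in\ker Z^{RS}$ from Theorem~\ref{thm:double_shuffle}, observes that under Zagier's dimension conjecture the statement is equivalent to the purely algebraic dimension formula
\[
\dim_{\mathbb{Q}}\bigl\langle D(u,v,w)\bigr\rangle \stackrel{?}{=} 2^{k-1}-d_{k}-d_{k-1},
\]
and reports that this last equality has been verified by computer up to $k=14$. So there is no ``paper's own proof'' to compare against; your honest conclusion that an unconditional argument is blocked by Zagier's conjecture and the classical double shuffle conjecture is precisely the situation the paper leaves things in.

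One small calibration: your proposed equivalence (the conjecture in degree $k$ $\Leftrightarrow$ regularized double shuffle gives all MZV relations in weight $\le k$) is a sharper claim than what the paper states, and establishing it would itself require nontrivial work---in particular, showing that every classical double shuffle relation can be rewritten inside the span of the $D(u,v,w)$ after accounting for the $2\pi i$-twist. The paper sidesteps this by passing directly to the dimension count, which is weaker but immediately testable.
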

If we assume Zagier's conjecture (\ref{eq:Zagier_Conj}), the above
conjecture is equivalent to
\begin{equation}
\dim_{\mathbb{Q}}\left\langle D(u,v,w)\mid a+b+c=k-1,\ (u,v,w)\in\mathfrak{h}_{a}\times\mathfrak{h}_{b}\times\mathfrak{h}_{c}\right\rangle _{\mathbb{Q}}\stackrel{?}{=}2^{k-1}-d_{k}-d_{k-1}\label{eq:conj_dim_of_rels}
\end{equation}
for $k\geq1$. We verified (\ref{eq:conj_dim_of_rels}) up to $k=14$
by numerical computation.

\section*{Acknowledgments}

The author would like to thank David Jarossay, Masanobu Kaneko and
Koji Tasaka for useful comments. This work was supported by JSPS KAKENHI
Grant Numbers JP18J00982, JP18K13392.

\bibliographystyle{plain}
\bibliography{RSMZV_ref}

\end{document}